\documentclass[11pt,dvipsnames,svgnames,table]{article}
\usepackage[T1]{fontenc}
\usepackage{graphicx,amsmath,amsthm,amsfonts,amssymb,microtype,thmtools,mathtools,anyfontsize,thm-restate,verbatim,tikz}
\usepackage{xcolor}
\usetikzlibrary{intersections}
\usetikzlibrary{external}
\usepackage{esvect}
\usepackage{wrapfig}
\usepackage{paralist, tabularx}
\usepackage{subcaption}
\usepackage{float}
\usepackage{bm}
\usepackage[shortlabels]{enumitem}
\setlist[itemize]{topsep=0ex,itemsep=0ex,parsep=0.4ex}
\setlist[enumerate]{topsep=0ex,itemsep=0ex,parsep=0.4ex}
\usepackage[unicode=true]{hyperref}
\hypersetup{
colorlinks=true,
breaklinks=true,
linkcolor={black},
citecolor={black},
urlcolor={blue!60!black},
pdftitle={Sparse String Graphs and Region Intersection Graphs over Minor-Closed Classes have Linear Expansion},
pdfauthor={Nikolai Karol and David R. Wood}}
\usepackage[noabbrev,capitalise]{cleveref}
\crefname{lem}{Lemma}{Lemmas}
\crefname{thm}{Theorem}{Theorems}
\crefname{cor}{Corollary}{Corollaries}
\crefname{prop}{Proposition}{Propositions}
\crefname{conj}{Conjecture}{Conjectures}
\crefname{open}{Open Problem}{Open Problems}
\crefname{question}{Question}{Questions}
\crefname{claim}{Claim}{Claims}
\crefformat{equation}{(#2#1#3)}
\Crefformat{equation}{Equation #2(#1)#3}

\newcommand{\defn}[1]{\textcolor{Maroon}{\emph{#1}}}

\newcommand{\GG}{\mathcal{G}}

\usepackage[longnamesfirst,numbers,sort&compress]
{natbib}
\makeatletter
\def\NAT@spacechar{~}
\makeatother
\usepackage[margin=30mm]{geometry}
\renewcommand{\geq}{\geqslant}
\renewcommand{\leq}{\leqslant}

\DeclareMathOperator{\dist}{dist}

\DeclareMathOperator{\scol}{scol}

\DeclareMathOperator{\sreach}{reach}
\DeclareMathOperator{\polylog}{polylog}
\renewcommand{\thefootnote}{\fnsymbol{footnote}}
\theoremstyle{plain}
\newtheorem{thm}[equation]{Theorem}
\newtheorem{lem}[equation]{Lemma}

\newtheorem{cor}[equation]{Corollary}
\newtheorem{prop}[equation]{Proposition}

\newtheorem{claim}{Claim}[thm]

\theoremstyle{definition}

\begin{document}

\author{
Nikolai Karol\footnotemark[2] \qquad
David~R.~Wood\footnotemark[2]}

\footnotetext[2]{School of Mathematics, Monash   University, Melbourne, Australia  (\texttt{\{Nikolai.Karol,David.Wood\}@monash.edu}).}

\sloppy

\title{\bf\boldmath 
Sparse String Graphs and Region Intersection Graphs over Minor-Closed Classes have Linear Expansion
}

\maketitle


\begin{abstract} 
We prove that sparse string graphs in a fixed surface have linear expansion. We extend this result to the more general setting of sparse region intersection graphs over any proper minor-closed class. The proofs are combinatorial and self-contained, and provide bounds that are within a constant factor of optimal. Applications of our results to graph colouring are presented.
\end{abstract}

\renewcommand{\thefootnote}
{\arabic{footnote}}

\section{Introduction} \label{section:intro}

A \defn{string graph} $G$ is the intersection graph of a collection $\mathcal{C}$ of non-self-intersecting continuous curves (also called \defn{strings}) in a surface. That is, the vertices of $G$ are represented by the curves of $\mathcal{C}$ such that two vertices of $G$ are adjacent if and only if the corresponding curves intersect.

String graphs developed out of the study of patterns of mutations in DNA sequences~\citep{Benzer59}, and of electrical networks realisable by printed circuits~\citep{Sinden66}. String graphs
were first formally defined by \citet{EET76} in 1976. Ever since, string graphs have been extensively studied; see \citep{Mat14,Pawlik14,SSS-JCSS03,Krat-JCTB91,Mat15,Lee17,SS-JCSS04,FP10,FP12,FP14,PRY20,Tom24,KM91,PT06,Kratochvil91,Davies25,CCTZ25,Karol25} for example.

This paper focuses on the following extension of string graphs, introduced by \citet{Lee17}. A graph $G$ is a \defn{region intersection graph} over a graph $H$ if there exists a collection $(A_{v} \subseteq H: v \in V(G))$ of connected subgraphs of $H$ such that $uv \in E(G)$ if and only if $V(A_u) \cap V(A_v) \neq \emptyset$. A \defn{class} is a collection of graphs, closed under isomorphism. A region intersection graph over a graph class $\mathcal{G}$ is a region intersection graph over a graph in $\mathcal{G}$.
It is straightforward to show that string graphs in the plane are precisely the region intersection graphs over planar graphs (see \citep[Lemma~1.4]{Lee17}). An analogous proof shows that for any surface $\Sigma$, string graphs in $\Sigma$ are precisely the region intersection graphs over the class of graphs embeddable in $\Sigma$.

It is widely recognised that string graphs constitute a complicated class of graphs. For example, recognition of string graphs is NP-hard~\citep{Krat-JCTB91,SSS-JCSS03}. There are triangle-free string graphs with arbitrarily large chromatic number~\citep{Pawlik14}. There exist string graphs requiring exponentially many intersection points in any realisation by a collection of curves~\citep{KM91}. The number of labelled $n$-vertex string graphs in the plane is $2^{(\frac{3}{4} + o(1))\binom{n}{2}}$~\citep{PT06}, while (for example) the number of labelled $n$-vertex graphs in any proper minor-closed\footnote{A graph $J$ is a \defn{minor} of a graph $G$ if a graph isomorphic to $J$ can be obtained from $G$ by vertex deletion, edge deletion, and edge contraction. A graph class $\mathcal{G}$ is \defn{minor-closed} if for every graph $G \in \mathcal{G}$, every minor of $G$ is in $\mathcal{G}$. A minor-closed class $\mathcal{G}$ is \defn{proper} if $\mathcal{G}$ is not the class of all graphs, so $\mathcal{G}$ must be $H$-minor-free for some graph $H$.} class is $2^{\mathcal{O}(n \log n)}$~\citep{NSTW06}.

The primary contribution of this paper is to show a new positive result about the structure of sparse string graphs, and to extend this to sparse region intersection graphs over proper minor-closed classes. Specifically, we show they have linear expansion, which is a strong property in the Graph Sparsity Theory of \citet{Sparsity}. To explain this result, several definitions are needed. For a graph $G$ and a set of vertices $S \subseteq V(G)$, the subgraph of $G$ \defn{induced by $S$}, denoted \defn{$G[S]$}, has vertex set $S$ and its edge set is the set of edges of $G$ with both endpoints in $S$.  A \defn{model} of a graph $J$ in $G$ is a collection of sets $(S_{i} : i \in V(J))$ indexed by the vertices of $J$ such that:

\begin{itemize}
    \item for each $i \in V(J)$, the set $S_{i} \subseteq V(G)$ is non-empty and $G[S_{i}]$ is connected,
    \item $S_i \cap S_j = \emptyset$ for all distinct $i, j \in V(J)$, and
    \item for every edge $ij \in E(J)$, there is an edge of $G$ between $S_i$ and $S_j$. 
\end{itemize}

The sets $S_{i}$ are called \defn{branch sets} of the model. It is folklore that $J$ is a minor of $G$ if and only if there is a model of $J$ in $G$.

The \defn{radius} of a connected graph $S$ is the minimum non-negative integer $r$ such that for some vertex $c \in V(S)$ and for every vertex $w \in V(S)$, we have $\dist_{S}(c, w) \leqslant r$. Such a vertex $c$ is called a \defn{centre} of $S$. For a graph $J$ and an integer $r \geqslant 0$, a model $(S_{i} : i \in V(J))$ of $J$ in a graph $G$ is \defn{$r$-shallow} if for each $i \in V(J)$, the radius of $G[S_{i}]$ is at most $r$. A graph $J$ is an \defn{$r$-shallow minor} of a graph $G$ if there is an $r$-shallow model of $J$ in $G$.

The \defn{edge density} of a graph $G$ is $|E(G)|/|V(G)|$ if $V(G)\neq\emptyset$, and is 0 if $V(G)=\emptyset$. For a graph $G$, let \defn{$\nabla_r(G)$} be the maximum edge density of an $r$-shallow minor of $G$. A graph class $\mathcal{G}$ has \defn{bounded expansion} if there is a function $f$ such that $\nabla_r(G)\leq f(r)$ for every graph $G\in\mathcal{G}$ and integer $r\geq 0$. Often the magnitude of such a function $f$ matters. A graph class $\mathcal{G}$ has \defn{polynomial expansion} if there exists  $c\in\mathbb{R}$ such that $\nabla_r(G)\leq c(r+1)^c$ for every graph $G\in\mathcal{G}$ and integer $r\geq 0$. As an illustrative example, the class of graphs with maximum degree at most 3 has bounded expansion (with expansion function $f(r)\in \mathcal{O}(2^r)$), but does not have polynomial expansion. A graph class $\mathcal{G}$ has \defn{linear expansion} if there exists  $c\in\mathbb{R}$ such that $\nabla_r(G)\leq c(r+1)$ for every graph $G\in\mathcal{G}$ and integer $r\geq 0$. As another illustrative example, the class of 3-dimensional grid graphs has polynomial expansion (with expansion function $f(r)\in \mathcal{O}(r^2)$), but does not have linear expansion.

The \defn{maximum density} of a graph $G$ is the maximum edge density of a subgraph of $G$. In this paper, a graph class $\mathcal{G}$ is `sparse' if the graphs in $\mathcal{G}$ have bounded maximum density. For a graph class to have bounded expansion, it must be sparse.

Our first result is that sparse string graphs in the plane have linear expansion.

\begin{thm} \label{thm:mainplane} For any integers $d, r \geqslant 0$ and for every string graph $G$ in the plane with maximum density at most $d$,
$$\nabla_r(G) \leqslant 3e((2r + 2) d + 1).$$
\end{thm}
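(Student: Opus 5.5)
Our plan is to reduce to planar graphs by a random sampling argument. The factor $3$ will come from the edge density of planar graphs, and the factor $e$ from optimising a sampling probability $p=1/k$ with $k=(2r+2)d+1$. This plan is not fully worked out; the third step below is not yet settled.

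\emph{Setup.} By the fact (recalled in the introduction) that string graphs in the plane are exactly region intersection graphs over planar graphs, fix a planar graph $H$ and connected subgraphs $(A_v\subseteq H : v\in V(G))$ representing $G$. Let $(S_i : i\in V(J))$ be an $r$-shallow model of $J$ in $G$ with centres $c_i$, and fix BFS trees $T_i$ of $G[S_i]$ of radius at most $r$. Since $G$ has maximum density at most $d$, every subgraph has a vertex of degree at most $2d$. So $G$ has an acyclic orientation with out-degree at most $d$: repeatedly remove a minimum-degree vertex and orient its edges appropriately.

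Two further observations will be used.
\begin{itemize}
\item For each vertex $x$ of $H$, the vertices $v$ with $x\in V(A_v)$ form a clique of $G$.
\item Each edge $ij$ of $J$ is witnessed by a path $P_{ij}$ in $G$ from $c_i$ to $c_j$. It runs along $T_i$, crosses one edge $uv$ with $u\in S_i$ and $v\in S_j$, and runs along $T_j$. It has at most $2r+2$ vertices.
\end{itemize}

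\emph{Sampling.} Choose a random subset $X\subseteq V(J)$ with each vertex included independently with probability $p$. Among the surviving vertices, we want regions of $H$ that are pairwise disjoint connected subgraphs. For each surviving $i$ take $R_i=\bigcup_{v\in S_i}A_v$, possibly trimmed. For every edge $ij$ of $J$ with both ends surviving, we want $R_i$ and $R_j$ to be joined by an edge of $H$, or to be separable so that they are.

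Then the surviving vertices and these edges form a minor of the planar graph $H$. Hence the number of such edges is less than $3$ times the number of survivors.

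\emph{The conflict rule (main obstacle).} The key step is to define a conflict relation $D$ on $V(J)$. Each $i$ should have at most $k-1=(2r+2)d$ potential \emph{blockers}, and the surviving vertices should be those in $X$ with no blocker in $X$. Regions of surviving vertices should then be disjoint after trimming. The count $(2r+2)d$ would come from charging along the witness paths: each of the at most $2r+2$ vertices on a path contributes its at most $d$ out-neighbours. Two overlapping regions $R_i$ and $R_j$ force an edge of $G$ between $S_i$ and $S_j$. By the orientation, that edge is an out-edge from one side, and the rule $D$ must be arranged so that one of $i,j$ blocks the other. Making this precise, and checking that connectivity of the trimmed regions is preserved, is where we expect the real work to lie. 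If this fails, we would first try replacing $R_i$ by the union of $A_v$ over the vertices $v$ of the witness paths only. That bounds the relevant structure by the $2r+2$ vertices per path.

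\emph{Expectation computation.} Given the rule, the expected number of surviving vertices is at most $p|V(J)|$. An edge $ij$ of $J$ should survive with probability at least $p(1-p)^{k-1}$, conditioned appropriately, giving expected number of surviving edges at least $p(1-p)^{k-1}|E(J)|$. Combining with the planar bound gives
\[
p(1-p)^{k-1}|E(J)| < 3p|V(J)|.
\]
With $p=1/k$ we have $(1-1/k)^{k-1}\geq 1/e$, so $|E(J)|\leq 3e\,k\,|V(J)| = 3e((2r+2)d+1)\,|V(J)|$, as required.
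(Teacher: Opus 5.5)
\textbf{There is a genuine gap.} Your architecture matches the paper's: a sampling step with $p=1/((2r+2)d+1)$, fed into the bound $|E|<3|V|$ for minors of a planar graph. But the step you leave open is the entire substance of the proof, and the form you sketch for it cannot work.

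A \emph{vertex}-based blocking rule with at most $(2r+2)d$ blockers per vertex is not available. A vertex $i$ of $J$ may have unboundedly many incident edges, hence unboundedly many witness paths through $S_i$, so charging along witness paths only bounds conflicts per \emph{edge}. Also, the rule ``overlapping regions force one of $i,j$ to block the other'' would destroy every edge of $J$: for $ij\in E(J)$ the full regions $\bigcup_{v\in S_i}V(A_v)$ and $\bigcup_{v\in S_j}V(A_v)$ always meet.

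The paper instead deletes an edge $ij$ when some $\ell\notin\{i,j\}$ is sampled such that an arc of $\vec G$ goes from $S_\ell$ into the witness path $Q_{ij}$ (a junction). In-degree at most $d$ bounds the number of such $\ell$ by $(2r+2)d$. The missing idea is then \cref{lem:model}: for junction-free $J'$, the subgraph induced by the vertices of degree at least $2$ is a minor of $H$. Its proof works as follows:
\begin{itemize}
\item Branch sets are built only from the $A_t$ with $t$ on witness paths; this is your fallback idea.
\item The crossing edge $xy$ of each $Q_{ij}$ ($i\prec j$) is chosen at minimum depth in $T_i$, then minimising $\dist_{A_x}(A_x\cap A_{x^{\uparrow}},A_x\cap A_y)$. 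This ensures that when $x\neq c_i$, the forward piece $B_{i,j}$ avoids every $A_r$ with $r\in S_j$.
\item Vertices are then processed in order, extending $B_j$ along $P_{j,i}$ only up to the first $A_z$ meeting $B_i$.
\item Every potential overlap is shown to yield a junction. The direction of the arc decides for which edge, and the degree condition guarantees that each centre $c_i$ lies on the witness path of a second edge.
\end{itemize}
The degree condition is genuinely needed: $G=K_2$ is a region intersection graph over $H=K_1$. So your claim that all survivors form a minor of $H$ is false. Survivors of degree at most $1$ must be counted separately, which costs nothing since $3\geq 1$.

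There are two further concrete errors.

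First, the orientation. Maximum density $d$ gives degeneracy at most $2d$, so your greedy procedure yields an acyclic orientation with out-degree at most $2d$, not $d$. An orientation with in-degree at most $d$ exists by \cref{thm:Hakimi}, but in general not an acyclic one. For example, a cycle has maximum density $1$, yet any acyclic orientation has a source, forcing some vertex to have in-degree at least $2$. Using degeneracy would roughly double your constant.

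Second, the computation. An edge survives only if \emph{both} endpoints are sampled, so its survival probability is $p^2(1-p)^{(2r+2)d}$. Your displayed inequality $p(1-p)^{k-1}|E(J)|<3p|V(J)|$ would actually give $|E(J)|<3e|V(J)|$, independent of $r$ and $d$, which contradicts \cref{main:optimality}. The correct inequality $p^2(1-p)^{k-1}|E(J)|\leq 3p|V(J)|$ gives $|E(J)|\leq 3|V(J)|/(p(1-p)^{k-1})\leq 3ek|V(J)|$.
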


Improving on results of \citet{FP10,FP14}, \citet{Lee17} proved that there exists an absolute constant $c$ such that for any $t \geqslant 1$, every $K_{t,t}$-free string graph in the plane has maximum density at most $ct(\log t)$. Thus \cref{thm:mainplane} implies that $K_{t, t}$-free string graphs in the plane have linear expansion.

\begin{cor} 
There exists an absolute constant $c$ such that for any integers $t \geqslant 1$ and $r \geqslant 0$, for every $K_{t, t}$-free string graph $G$ in the plane, $$\nabla_r(G) \leqslant c(r+1)t(\log t).$$
\end{cor}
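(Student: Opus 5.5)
The corollary is immediate from \cref{thm:mainplane} together with the result of \citet{Lee17} quoted just above. I will combine them directly, with no new combinatorics. Fix $t\geqslant 1$ and $r\geqslant 0$, and let $G$ be a $K_{t,t}$-free string graph in the plane. By \citet{Lee17} there is an absolute constant $c_0$ such that every $K_{t,t}$-free string graph in the plane has maximum density at most $c_0 t\log t$. Since every subgraph of $G$ that is itself a string graph is represented by a subcollection of the curves, the bound applies to $G$.

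One small technicality is that \cref{thm:mainplane} requires $d$ to be an integer. So I set $d := \lceil c_0 t\log t\rceil$, and then $G$ has maximum density at most $d$. The theorem gives
$$\nabla_r(G)\leqslant 3e\big((2r+2)d+1\big).$$

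It remains to absorb the ceiling and the additive $+1$ into a bound of the form $c(r+1)t\log t$. The only real point of care is the degenerate case $t=1$ (and small $t$), where $\log t=0$ or is tiny.

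For $t=1$, a $K_{1,1}$-free graph has no edges. Hence every shallow minor of $G$ is edgeless and $\nabla_r(G)=0$, so the bound holds trivially.

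For $t\geqslant 2$, we have $\log t\geqslant \log 2>0$. Then $d\leqslant c_0 t\log t+1\leqslant (c_0+1/\log 2)\,t\log t$. Also $1\leqslant (r+1)t\log t/\log 2$. Therefore
$$3e\big((2r+2)d+1\big)\leqslant 3e\Big(2(c_0+\tfrac{1}{\log 2})+\tfrac{1}{\log 2}\Big)(r+1)t\log t.$$
Taking $c$ to be this absolute constant completes the proof. I expect no step to be an obstacle; the only thing to check is this handling of small $t$ and integrality.
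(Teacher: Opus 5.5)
Your proof is correct and is exactly the paper's argument: substitute Lee's $\mathcal{O}(t\log t)$ bound on the maximum density of $K_{t,t}$-free string graphs into \cref{thm:mainplane}. You also supply details the paper leaves implicit, namely rounding $d$ up to an integer and treating $t=1$ separately, where $\log t=0$ and the additive $+1$ in \cref{thm:mainplane} could not otherwise be absorbed.
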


\cref{thm:mainplane} is simply the $g=0$ case of the following generalisation for any surface\footnote{The \defn{Euler genus} of a surface with $h$ handles and $c$ cross-caps is $2h + c$. The \defn{Euler genus} of a graph $H$ is the minimum Euler genus of a surface in which $H$ embeds without crossings.}.

\begin{thm} \label{thm:mainsurface} 
For any integers $d, g, r \geqslant 0$, for any surface $\Sigma$ with Euler genus $g$, and for every string graph $G$ in $\Sigma$ with maximum density at most $d$,
$$\nabla_r(G)\leq (\sqrt{3g/2} + 3)e((2r + 2) d + 1).$$
\end{thm}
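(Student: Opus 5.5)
We represent $G$ as a region intersection graph over a graph $H$ embedded in $\Sigma$. Take an $r$-shallow model $(S_i : i\in V(J))$ of a graph $J$ in $G$, and show that $|E(J)|/|V(J)| \le (\sqrt{3g/2}+3)e((2r+2)d+1)$. We may assume $J$ is exactly the minor given by the model, with no extra vertices.

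\textbf{Step 1: pass to a minor of $H$.} Fix $i$ with centre $c_i$. For each $w\in S_i$ choose a shortest path from $c_i$ to $w$ in $G[S_i]$; these paths form a BFS tree $T_i$ of depth at most $r$. The union $B_i=\bigcup_{v\in S_i} A_v$ is connected in $H$, since consecutive regions along tree edges intersect. The sets $B_i$ need not be disjoint: a vertex $x$ of $H$ can lie in regions $A_v$ for many different branch sets. We remove these overlaps randomly.

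\textbf{Step 2: random shrinking.} Choose a random ordering, or random priorities, of $V(J)$, and assign each vertex $x\in V(H)$ to one index $i$ among those with $x\in B_i$. The resulting classes should stay connected, which a Voronoi-type assignment does: assign $x$ to the index minimising some priority-weighted distance, tie-breaking consistently. The expected number of edges of $J$ that survive as adjacencies between the contracted classes must then be compared to $|E(J)|$.

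A cleaner alternative, which matches the factor $e$, is a sampling argument. Keep each vertex of $J$ independently with probability $p$. For a kept $i$, keep $B_i$ only if no conflicting $j$ is kept, where $j$ conflicts with $i$ when $B_i\cap B_j\neq\emptyset$.

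The key quantity is the number of conflicts per branch set. If $B_i\cap B_j\ne\emptyset$ then some $v\in S_i$ and $u\in S_j$ are adjacent in $G$. We bound conflicts using that $G$ has maximum density at most $d$: orient $G$ with outdegree at most $d$. For an $r$-shallow model with depth-$\le r$ trees, the number of edges of $J$ is at most about $(2r+2)d\,|V(J)|$-ish only after charging. In our setting, conflicts correspond to edges of $J$, so we want $J$ restricted to degree $\le D:=(2r+2)d$. Degeneracy arguments yield an ordering of $V(J)$ in which each $i$ has few back-conflicts. Charge each edge $vu$ of $G$ between branch sets to the tail in the orientation. A branch set's total outdegree is $d|S_i|$, but $|S_i|$ is unbounded, so charging should instead go through the tree paths.

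\textbf{Step 3: Euler's formula.} After shrinking, we obtain a minor $J'$ of $H$ with $|E(J')|\le 3|V(J')|+$ a genus term. Using the standard bound that a graph of Euler genus $g$ on $n$ vertices has at most $3n+3g$ edges (combined with a $\sqrt{g}$-type density bound, which gives $(\sqrt{3g/2}+3)n$ for the density of a simple graph of genus $g$), we get
\[\frac{|E(J)|}{|V(J)|} \le (\sqrt{3g/2}+3)\cdot \frac{1}{p(1-p)^{D}} .\]
Choosing $p=1/(D+1)$ gives the factor $e(D+1)$ with $D=(2r+2)d$, which is the claimed bound.

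\textbf{Main obstacle.} The hard part is establishing a bound of $(2r+2)d$ on the relevant ``conflict degree.'' Intersections of $B_i$ with other $B_j$ can be numerous, so what we need is an ordering in which each $i$ conflicts with at most $(2r+2)d$ earlier $j$ in the surviving structure. I would try contracting each $B_i$ only along the radius paths: keep, for each edge $ij$ of $J$, a witnessing $G$-edge, orient it via the $d$-degenerate orientation of $G$, and route through the tree paths of length $\le r$ on both sides. This accounts for the factor $2r+2$.
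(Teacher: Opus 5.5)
There is a genuine gap. In fact there are two, and together they are the whole difficulty.

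\textbf{First gap: the sampling rule keeps no edges.} The conflict relation $B_i\cap B_j\neq\emptyset$ is exactly adjacency in $J$: if $ij\in E(J)$, then some $v\in S_i$ and $u\in S_j$ are adjacent in $G$, so $A_v\cap A_u\neq\emptyset$. Keeping only conflict-free branch sets therefore keeps an independent set of $J$ and no edges at all.

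Your proposed repairs are not available either. You cannot restrict $J$ to degree at most $D=(2r+2)d$, or find an ordering with few back-conflicts, because $J$ may be a large clique (as in the optimality construction), and bounding its density is the goal. Likewise, no per-branch-set conflict count can be bounded, since $|S_i|$ is unbounded.

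What is needed is a conflict set that is defined per edge and involves only third parties:
\begin{itemize}
\item For each $ij\in E(J)$, fix a path $Q_{ij}$ consisting of a root path in $T_i$, one $G$-edge $xy$ with $x\in S_i$ and $y\in S_j$, and a root path in $T_j$. Then $|Q_{ij}|\le 2r+2$.
\item Orient $G$ with in-degree at most $d$, and let $R_{ij}$ be the set of $\ell\notin\{i,j\}$ such that $S_\ell$ contains an in-neighbour of a vertex of $Q_{ij}$. Then $|R_{ij}|\le (2r+2)d$.
\item Sample vertices with $p=1/(D+1)$, and keep $ij$ only if $i$ and $j$ are sampled and no vertex of $R_{ij}$ is.
\end{itemize}
This gives your factor $1/(p(1-p)^D)<e(D+1)$, and the surviving subgraph $J'$ is ``junction-free''.

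\textbf{Second gap: Step~3 asserts a minor of $H$ without constructing one.} This is the bulk of the proof. You say that after shrinking one obtains a minor of $H$, but you do not say which edges survive or why the classes are disjoint, connected and pairwise adjacent. This is not automatic even for junction-free $J'$. For every surviving edge $ij$, the sets $B_i$ and $B_j$ still intersect. Moreover, with $B_i=\bigcup_{v\in S_i}V(A_v)$, vertices of $S_i$ off the paths $Q_{ij}$ can meet regions of arbitrary other branch sets, and junction-freeness does not control that.

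The paper instead builds the branch sets in $H$ only from regions of vertices on the tree paths $P_{i,j}$, in a fixed vertex order:
\begin{itemize}
\item For $i<j$, it takes the regions along $P_{i,j}$. When $x\neq c_i$, these are truncated inside $A_x$, using the minimal choices of $x$ and $y$, so that they avoid the regions of $S_j$.
\item It then inductively adds to $B_j$ the regions along $P_{j,i}$, cut off where they first meet the already constructed $B_i$.
\item Every potential overlap is ruled out because it would create a junction or contradict the choice of $x$ and $y$.
\end{itemize}
This works only after deleting the vertices of degree at most $1$ in $J'$. For example, $K_2$ is a junction-free region intersection graph over $K_1$. The deleted vertices are then absorbed using $t\ge 1$. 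Without such a construction there is no minor of $H$ to which Euler's formula can be applied, so the bound in Step~3 is unsupported.
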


We in fact prove the following significant generalisation of \cref{thm:mainplane,thm:mainsurface} in the setting of region intersection graphs over proper minor-closed classes.

\begin{thm} \label{main:rigs} 
For any integers $d, r \geqslant 0$ and real number $t \geqslant 1$, for any minor-closed class $\mathcal{G}$ such that every graph in $\mathcal{G}$ has edge density at most $t$, for every region intersection graph $G$ over $\mathcal{G}$ where $G$ has maximum density at most $d$, $$\nabla_{r}(G) \leqslant et((2r + 2) d + 1).$$
\end{thm}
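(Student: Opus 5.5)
The plan is to take an $r$-shallow model $(S_i : i \in V(J))$ of a graph $J$ in $G$, with centres $c_i$, and to build from it, by a random sparsification, a minor of the host graph $H \in \mathcal{G}$. Since $\mathcal{G}$ is minor-closed, that minor has edge density at most $t$, and comparing expectations will give $|E(J)| \leqslant e t((2r+2)d+1)|V(J)|$. Fix a representation $(A_v \subseteq H : v \in V(G))$ of $G$ over $H$.

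\textbf{Step 1: local multiplicity.} For each $x \in V(H)$, the set $\{v : x \in V(A_v)\}$ is a clique in $G$. A clique on $k$ vertices has edge density $(k-1)/2$, so maximum density at most $d$ forces $k \leqslant 2d+1$. Also, every subgraph of $G$ has a vertex of degree at most $2d$, so $G$ is $2d$-degenerate. Fix an acyclic orientation of $G$ with all out-degrees at most $d$ (this is available since every subgraph has at most $d$ edges per vertex). These two facts are the only uses of the density hypothesis, and they are where the term $d$ in $(2r+2)d+1$ should come from.

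\textbf{Step 2: Voronoi-type branch sets in $H$.} Sample each $i \in V(J)$ independently with probability $p$, giving a set $I$. For each $x \in V(H)$ covered by some $A_v$ with $v \in S_i$ and $i \in I$, assign $x$ to the sampled $i$ minimising $\dist_{G[S_i]}(c_i, v)$ over such $v$, breaking ties by a fixed order. The region $B_i = \bigcup_{v \in S_i} A_v$ is connected because $G[S_i]$ is connected and consecutive regions $A_v$ intersect. The aim is to show that the set of vertices of $H$ assigned to $i$ is connected, by walking along a BFS path of $G[S_i]$ from $c_i$, in the style of a shortest-path Voronoi argument. This yields a model of a graph $J_I$ in $H$ on vertex set $I$, so $|E(J_I)| \leqslant t|I|$.

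\textbf{Step 3: charging edges of $J$.} For each edge $ij \in E(J)$, pick $u \in S_i$ and $v \in S_j$ with $uv \in E(G)$, and consider the path from $c_i$ to $u$, across $uv$, to $c_j$. This path has at most $2r+2$ vertices. The goal is to identify a set $K_{ij}$ of at most $(2r+2)d+1$ other indices with the following property: if $i, j \in I$ and $K_{ij} \cap I = \emptyset$, then the parts of $i$ and $j$ are adjacent (or share the point of $A_u \cap A_v$) in $H$, so $ij \in E(J_I)$. The candidates for $K_{ij}$ are the indices of branch sets containing an out-neighbour, under the orientation of Step 1, of a path vertex. Then
\[
\mathbb{E}|E(J_I)| \geqslant p^2(1-p)^{k}|E(J)| \quad\text{and}\quad \mathbb{E}|I| = p|V(J)|, \qquad\text{where } k = (2r+2)d+1.
\]
Choosing $p$ of order $1/k$ and using $(1-1/k)^{k} \geqslant 1/e$-type estimates, together with $|E(J_I)| \leqslant t|I|$, should give the claimed bound with constant $e\,t\,k$. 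One may need to orient the edges of $J$ so that only one endpoint's sampling is charged, to avoid losing an extra factor of $k$.

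\textbf{Main obstacle.} The crux is Step 3: defining $K_{ij}$ of size only $(2r+2)d+1$ so that absence of $K_{ij}$ from $I$ genuinely protects the edge $ij$, and simultaneously ensuring in Step 2 that the assigned parts stay connected. The natural way to get a bound linear in $r$, rather than the multiplicity $(2d+1)$ raised to a power, is to use the degeneracy orientation and charge each conflict at a vertex $x \in V(H)$ to an out-edge of a path vertex. I would first try to make the assignment rule lexicographic in (distance to centre, position in the degeneracy order) so that exactly these out-neighbours are the only possible interceptors.
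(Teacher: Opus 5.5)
\paragraph{Verdict: there is a genuine gap.} The probabilistic half of your plan is right, but the combinatorial half, which is the core of the theorem, is missing, and the construction you propose for it fails.

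\paragraph{What works.} Your Step 3 sampling is exactly the paper's probabilistic lemma. Your $K_{ij}$ satisfies $|K_{ij}|\leqslant (2r+2)d$, not $(2r+2)d+1$. Taking $p=1/((2r+2)d+1)$ and using $(m+1)^{m+1}/m^m<e(m+1)$ gives exactly $e\,t((2r+2)d+1)$. There is no need to orient $E(J)$, since only a factor $1/(p(1-p)^{m})$ is lost.

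\paragraph{The gap: Step 2 does not produce a model in $H$.} The whole difficulty is the step you leave open: showing that the surviving, junction-free graph has edge density at most $t$. Your Voronoi assignment over all of $\bigcup_{v\in S_i}V(A_v)$ can produce parts that are disconnected or empty.
\begin{itemize}
\item \emph{Disconnected parts.} Let $v\in S_i$ be adjacent to $c_i$, with $A_v$ a path $h_1h_2h_3$, where $h_3\in V(A_{c_i})$, $h_2\in V(A_{c_{i'}})$ for some sampled $i'$, and $h_1$ is a leaf of $H$ lying in no other region. Then $h_2$ is assigned to $i'$, and $h_1$ is isolated in $i$'s part. Since $v$ need not lie on the path of any surviving edge, the sampling gives you no control over this.
\item \emph{Empty parts.} A surviving edge whose endpoint has no other surviving edge cannot be certified at all. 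For $G=K_2$ over $H=K_1$, with $J=K_2$ and $K_{12}=\emptyset$, both regions are the single vertex of $H$, so one part is empty and $J_I$ is not a minor of $H$.
\end{itemize}
Your phrase ``adjacent (or share the point of $A_u\cap A_v$)'' hides this problem, because shared points violate disjointness of branch sets.

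\paragraph{The ideas the paper uses to close it.} The model-construction lemma relies on three ingredients.
\begin{itemize}
\item Branch sets in $H$ are built only from trees $A_t$ with $t$ on the half-paths $P_{i,j}\subseteq S_i$ of surviving edges. Every possible contact is then between path vertices, where junction-freeness applies.
\item Vertices of degree at most $1$ in the junction-free graph $J'$ are discarded. Only the remainder $J^*$ is shown to be a minor of $H$, and $t\geqslant 1$ absorbs the discarded edges. Degree at least $2$ is what makes $\{c_i\}$ independent in $G$ and forbids vertices on other paths from touching $A_{c_i}$.
\item Junction-freeness does not rule out contacts between $P_{i,j}$ and $P_{j,\ell}$, since an arc from $P_{j,\ell}$ into $P_{i,j}$ need not be a junction. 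So the connecting edge $xy$ is chosen to minimise $\dist_{G[S_i]}(c_i,x)$ and then the distance inside $A_x$, which yields property $(\star)$. The branch sets are then built in two phases along a vertex ordering: first $B_{i,j}$ for $i<j$, then inductively $B_{j,i}$, truncated at its first contact with the already-built $B_i$.
\end{itemize}

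\paragraph{A separate error in Step 1.} Maximum density $d$ gives only $2d$-degeneracy. For example, $K_{2d+1}$ has maximum density $d$ but no acyclic orientation with all out-degrees at most $d$. You must instead use Hakimi's orientation, which may contain cycles. Consequently, tie-breaking by a degeneracy order at cost $d$ is not available.
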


\cref{main:rigs} implies \cref{thm:mainsurface} (and thus \cref{thm:mainplane}) since string graphs in $\Sigma$ are precisely the region intersection graphs over graphs embeddable in $\Sigma$, and it is well-known\footnote{For $n \geqslant 1$, consider an $n$-vertex graph $m$-edge graph with edge density $d$ and Euler genus~$g$. So $d = \frac{m}{n}$. By Euler's formula,  $m\leq 3(g + n - 2)<3(g + n)$. So $d = \frac{m}{n} < \frac{3g}{n} + 3$. If $\frac{3g}{n}\leq \sqrt{3g/2}$ then $d < \sqrt{3g/2} + 3$. Otherwise, $\frac{3g}{n}>\sqrt{3g/2}$, implying $n<\sqrt{6g}$. So $d=\frac{m}{n}\leq \binom{n}{2}/n<\frac{n}{2}<\frac{\sqrt{6g}}{2}=\sqrt{3g/2}$. So $d < \sqrt{3g/2} + 3$ regardless of whether $\frac{3g}{n}\leq \sqrt{3g/2}$ or $\frac{3g}{n}>\sqrt{3g/2}$.} that every graph with Euler genus $g$ has maximum density at most $\sqrt{3g/2} + 3$.

\citet{Kostochka82,Kostochka84} and \citet{Thomason84,Thomason01} proved that there exists an absolute constant $c$ such that every $K_{t}$-minor-free graph has edge density at most $ct\sqrt{\log t}$. Thus \cref{main:rigs} implies the following.

\begin{cor} \label{thm:minorrigs} There exists an absolute constant $c$ such that for any integers $d, r \geqslant 0$ $t \geqslant 1$, for every region intersection graph $G$ over a $K_{t}$-minor-free graph where $G$ has maximum density at most $d$, $$\nabla_r(G) \leqslant ct\sqrt{\log t}\,d(r + 1).$$
\end{cor}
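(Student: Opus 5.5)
The plan is to deduce \cref{thm:minorrigs} directly from \cref{main:rigs}, applied to the class $\mathcal{G}_t$ of all $K_t$-minor-free graphs.

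First, check the hypotheses. Since minors of minors are minors, $\mathcal{G}_t$ is minor-closed. By the Kostochka--Thomason theorem there is an absolute constant $c_0$ such that every $K_t$-minor-free graph has edge density at most $c_0 t\sqrt{\log t}$.

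Some care is needed for small $t$, where $\log t$ vanishes or is tiny. For $t \leqslant 2$, a $K_t$-minor-free graph has no edges, so its edge density is $0$. The point is only that we need a single real number $t' \geqslant 1$ bounding the edge density of every graph in $\mathcal{G}_t$. I would take
$$t' := \max\{1,\, c_0 t\sqrt{\log t}\},$$
and choose $c_0$ large enough (say $c_0 \geqslant 1/\sqrt{\log 3}$) that $c_0 t\sqrt{\log t} \geqslant 1$ for all $t \geqslant 3$. With this choice $t' = c_0 t\sqrt{\log t}$ whenever $t\geqslant 3$.

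Next, apply \cref{main:rigs} with $t'$ in place of $t$. This gives
$$\nabla_r(G) \leqslant e\,t'\bigl((2r+2)d+1\bigr).$$

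Then simplify the right-hand side. Assuming $d \geqslant 1$, we have $(2r+2)d + 1 \leqslant 3d(r+1)$. For $t\geqslant 3$ this yields
$$\nabla_r(G) \leqslant 3e\,c_0\,t\sqrt{\log t}\,d(r+1),$$
so we may take $c := 3ec_0$ (enlarged if needed).

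The main obstacle is purely the degenerate cases, where the right-hand side of the corollary is $0$. Each has a direct argument:
\begin{itemize}
\item If $d = 0$, then $G$ has no edges. Every $r$-shallow minor of $G$ is then edgeless, so $\nabla_r(G) = 0$.
\item If $t \leqslant 2$, each region lies in an edgeless host graph. Connectedness forces every region to be a single vertex, so two regions meet only if they are the same vertex. Hence $G$ is a disjoint union of cliques.
\end{itemize}
The second case needs more care. A disjoint union of cliques can have positive density, so I would either interpret $\log t$ with the convention that the bound is intended for $t \geqslant 2$ (with $\log 2>0$ absorbed into $c$), or note that for $t=2$ one has $t\sqrt{\log t}>0$ and the same computation with $t'=1$ applies after enlarging $c$. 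Only $t=1$ is truly degenerate: there the host graph class contains only the null graph, so $G$ is empty and $\nabla_r(G)=0$.

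With these cases dispatched, the corollary follows with an absolute constant $c$.
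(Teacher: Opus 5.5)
Your proposal is correct and is essentially the paper's argument: apply \cref{main:rigs} to the minor-closed class of $K_t$-minor-free graphs with the Kostochka--Thomason density bound in place of $t$, then use $(2r+2)d+1\leqslant 3d(r+1)$ for $d\geqslant 1$. Your treatment of the degenerate cases ($d=0$ and $t=1$) fills in details the paper leaves implicit. The case $t=2$ needs no separate discussion: $c_0\geqslant 1/\sqrt{\log 3}$ already gives $c_0\cdot 2\sqrt{\log 2}\geqslant\sqrt{\log 16/\log 3}>1$, so $t'=c_0t\sqrt{\log t}$ for all $t\geqslant 2$.
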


\subsection{Background} \label{Background}

The purpose of this section is to review prior work related to the expansion of string graphs and region intersection graphs. Most of these results depend on the connection between polynomial expansion and balanced sublinear separators\footnote{A \defn{balanced separator} in a graph $G$ is a set $S \subseteq V(G)$ such that each component of $G - S$ has at most $\frac{1}{2}|V(G)|$ vertices.}. \citet{DN16} (also see \citep{Dvorak21,Dvorak16,Dvorak18}) showed that a hereditary\footnote{A class $\mathcal{G}$ of graphs is \defn{hereditary} if for every graph $G \in \mathcal{G}$, every induced subgraph of $G$ is in $\mathcal{G}$.} graph class $\GG$ has polynomial expansion if and only if every graph in $\GG$ admits strongly sublinear separators. The best known bound on the expansion is due to \citet{Dvorak21}, whose proof refines a method of \citet{ER18}.

\begin{thm}[{\citep{Dvorak21,ER18}}]
\label{DER}

Let $\mathcal{G}$ be a hereditary class of graphs such that every $n$-vertex graph in $\GG$ has a balanced separator of size $\mathcal{O}(n^{1 - \varepsilon})$ for some $\varepsilon > 0$. Then for every $G \in \mathcal{G}$, $$\nabla_r(G) \in \mathcal{O}(r^{1/\varepsilon - 1}\polylog r).$$
\end{thm}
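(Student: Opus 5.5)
This result is quoted from \citet{Dvorak21} and \citet{ER18} and is not proved in this paper. What follows is how I would attempt a proof. Part of it may not work: the step I flag as the main obstacle does not close with the first-order estimates I have.

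\emph{Setup.} Fix $G \in \mathcal{G}$ and an $r$-shallow minor $J$ of $G$ with edge density $D=\nabla_r(G)$, and aim to bound $D$ in terms of $r$. Two standard reductions come first.

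\begin{itemize}
\item Pass to a subgraph of $J$ in which every subgraph is again an $r$-shallow minor of $G$, and which has minimum degree at least $D$.
\item Prune the model. In each branch set $S_i$, replace $G[S_i]$ by a BFS tree from its centre, and keep only the paths (each of length at most $r$) to vertices carrying edges of $J$. If $J$ has $n$ vertices and $m=Dn$ edges, the model then lives in an induced subgraph $G'$ of $G$ with $N \le n+2rm = n(1+2rD)$ vertices.
\end{itemize}

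Since $\mathcal{G}$ is hereditary, $G'$ and all its induced subgraphs have balanced separators of size $\mathcal{O}(N^{1-\varepsilon})$. A standard iteration then gives \emph{weighted} balanced separators of the same order, for any vertex weighting of $G'$.

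\emph{Transferring separators to $J$.} Weight each $v \in V(G')$ by $1/|S_i|$ if $v \in S_i$. A weighted balanced separator $X$ of $G'$ meets at most $|X|$ branch sets. Deleting the corresponding vertices of $J$ leaves components of total weight at most half, hence a balanced separator of $J$. The same argument applies to every subgraph of $J$, so every subgraph of $J$ on $n'$ vertices has a balanced separator of size $\mathcal{O}\big((n'(1+rD))^{1-\varepsilon}\big)$.

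\emph{Using density against the separators.} A dense graph whose subgraphs all have small separators cannot exist, so the next step is to exploit the density of $J$. I would pass, following \citet{ER18}, to a subgraph $J'$ of $J$ that is an expander: it has $n'$ vertices, density at least $D/\polylog$, and every balanced separator has size at least $n'/\polylog(n')$. Comparing this lower bound with the upper bound from the previous paragraph gives $n'^{\varepsilon} \lesssim (rD)^{1-\varepsilon}\polylog$. Together with $n' \gtrsim D$, this should be unwound into $D \in \mathcal{O}(r^{1/\varepsilon-1}\polylog r)$.

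\emph{Main obstacle.} The estimate above is circular in $D$. The branch-set sizes, and hence $N$, grow with $D$, so the inequality $D^{\varepsilon}\lesssim (rD)^{1-\varepsilon}$ yields nothing when $\varepsilon<1/2$. To break the circularity, I would first sparsify $J'$ while keeping its expansion: keep a random subset of edges so that degrees drop to about $D/\polylog$, but choose it so that each branch set needs only $\mathcal{O}(r)$ retained paths on average. The point is to make the total size of the pruned model $\mathcal{O}(rn')$, essentially independent of $D$. The separator bound then becomes $\mathcal{O}((rn')^{1-\varepsilon})$, and the comparison with $n'/\polylog$ gives $n'\lesssim r^{1/\varepsilon-1}\polylog r$, which bounds $D$. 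Controlling how the expansion degrades under this sparsification is the delicate part, and is where the refinement in \citet{Dvorak21} of the \citet{ER18} argument would be needed.
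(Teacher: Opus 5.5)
The paper does not prove this theorem. It only quotes it, remarking that the proofs of \citet{ER18} and \citet{Dvorak21} use expanders together with results of \citet{SS15} and \citet{CC15}.

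Your reductions, the weighted-separator transfer and the expander step are sound, and you locate the circularity correctly. The proposed repair does not close it, so there is a genuine gap.

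\textbf{Why the random sparsification fails.} As stated, the repair defeats itself. If degrees only drop to about $D/\polylog$, each branch set still needs that many root paths, so the pruned model still has $\Theta(rDn'/\polylog)$ vertices, which is the same circular quantity. To get $\mathcal{O}(rn')$ you need bounded average degree. Random edge-sampling at rate $\Theta(1/D)$ does not preserve the property you carry. Let $J'$ be a $3$-regular expander on $n'$ vertices together with vertex-disjoint copies of $K_D$ covering its vertex set. Every balanced separator of $J'$ has size $\Omega(n')$ and its density is about $D/2$, so it satisfies everything your expander step provides. After sampling, only $\Theta(n'/D)$ expander edges survive, and deleting their endpoints leaves pieces of size at most $D$. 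The sample therefore has balanced separators, and treewidth, of order $n'/D+D$, and $D$ re-enters your comparison. Uniform balanced separators of $J'$ are not an invariant that survives sparsification.

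\textbf{The missing idea.} Switch the invariant to treewidth, which is minor-monotone, and sparsify deterministically. This is the role of the result of \citet{CC15}: every graph of treewidth $k$ has a subgraph of maximum degree at most $3$ and treewidth at least $k/\polylog k$. The argument then runs as follows.
\begin{enumerate}
\item A tree decomposition of width $w$ yields a balanced separator of size $w+1$, so your expander has $\operatorname{tw}(J')\geq n'/\polylog n'$.
\item By \citet{CC15}, $J'$ therefore contains $J''$ with $\Delta(J'')\leq 3$ and $\operatorname{tw}(J'')\geq n'/\polylog n'$.
\item For each edge of $J''$, fix a realising edge $xy$ of $G$ and keep the root paths (each of length at most $r$) to $x$ and to $y$. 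Each branch set keeps at most three paths, so $J''$ is a minor of an induced subgraph $G''$ of $G$ with $|V(G'')|\leq (3r+1)n'$, independently of $D$.
\item $\mathcal{G}$ is hereditary and deleting edges preserves balanced separators, so every subgraph of $G''$ on $m$ vertices has a balanced separator of size $\mathcal{O}(m^{1-\varepsilon})$. The standard separator-to-treewidth argument then gives $\operatorname{tw}(G'')\in\mathcal{O}((rn')^{1-\varepsilon})$.
\item Comparing with $\operatorname{tw}(G'')\geq\operatorname{tw}(J'')\geq n'/\polylog n'$ gives $n'^{\varepsilon}\in\mathcal{O}(r^{1-\varepsilon}\polylog n')$, hence $n'\in\mathcal{O}(r^{1/\varepsilon-1}\polylog r)$.
\item Finally, $D/\polylog n'$ is at most the density of $J'$, which is less than $n'/2$. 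This bounds $D$.
\end{enumerate}
Since the comparison happens inside $G''$, your transfer of separators back to $J$ becomes unnecessary.
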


On the other hand, it follows from a result of \citet{PRS94} that polynomial expansion implies the presence of strongly sublinear separators (see, for example, \citep[Corollary~2]{Dvorak21}).

There has been a line of research establishing optimal bounds on the size of balanced separators in string graphs. First, \citet{FP10,FP14} proved that every string graph in the plane with $m$ edges has a balanced separator of size $\mathcal{O}(m^{3/4}\sqrt{\log m})$.  \citet{Mat14} improved this bound to $\mathcal{O}(\sqrt{m}\log m)$. \citet{Lee17} established the optimal bound of $\mathcal{O}(\sqrt{m})$. Extending this result, he proved that every region intersection graph with $m$ edges over a $K_{t}$-minor-free graph has a balanced separator of size $\mathcal{O}_{t}(\sqrt{m})$. Since every $n$-vertex graph with maximum density at most $d$ has at most $dn$ edges, this implies that every $n$-vertex region intersection graph $G$ with maximum density at most $d$ over a $K_{t}$-minor-free graph has a balanced separator of size~$\mathcal{O}_{t, d}(\sqrt{n})$. So \cref{DER} implies that such graphs have expansion $\mathcal{O}(r\polylog r)$, which prior to the current work was the best known bound, even for string graphs.

\cref{thm:mainplane,thm:mainsurface,main:rigs,thm:minorrigs} improve this bound to linear, removing the polylogarithmic term, and giving explicit constants with a self-contained purely combinatorial proof, avoiding dependance on the results of \citet{Lee17}, \citet{ER18} and \citet{Dvorak21} and the heavy machinery used in their proofs. In particular, \citet{Lee17} used methods of metric geometry, linear programming duality and multi-flows. \citet{ER18} and \citet{Dvorak21} used expanders and strong results of \citet{CC15} and \citet{SS15}. We use none of these methods or results.

Recently, \citet{Karol25} proved that string graphs with bounded maximum degree in a fixed surface have bounded row treewidth and layered treewidth, which implies that this class has linear expansion by a result of \citet{DMW17}. However, the bound of \citet[Theorem~22]{Karol25} is exponential in the maximum degree. We give much improved bounds on the expansion and work in the setting of bounded maximum density, which is significantly broader than bounded maximum degree. Moreover, sparse string graphs in the plane (with unbounded maximum degree) have unbounded row treewidth and layered treewidth (see, for example, \citep{MSSU24} or \citep[Section~7]{Karol25}). Therefore, row treewidth or layered treewidth cannot be used to prove \cref{thm:mainplane,thm:mainsurface,main:rigs}. 

\cref{main:rigs} is proved in \cref{section:RIGs}. \cref{section:stringgraphs} presents an alternative proof that sparse string graphs in a fixed surface have linear expansion. Specifically, we show that sparse string graphs admit so-called gap-cover-planar drawings, which is of independent interest. \cref{section:optimality} shows that the bound in \cref{thm:mainplane} is within an absolute constant factor of optimal. This implies that the dependence on $r$ and $d$ in \cref{thm:mainsurface} or \cref{main:rigs} cannot be improved to be sublinear. \cref{section:colouring} presents applications of \cref{thm:mainplane,thm:mainsurface,main:rigs} to graph colouring.

\section{Region Intersection Graphs} \label{section:RIGs}

This section proves \cref{main:rigs}. We split our proof into two lemmas.

\subsection{Probabilistic Sampling Lemma}

We now prepare for our first lemma. An \defn{orientation} of an undirected graph $G$ is a directed graph, denoted $\vec{G}$, obtained from $G$ by replacing each edge by one of the two possible arcs with the same endpoints. An edge of $\vec{G}$ directed from $u$ to~$v$ is denoted by $(u, v)$. For a vertex $v \in V(\vec{G})$, let \defn{$N_{\vec{G}}^{-}(v)$} be the set of vertices $u \in V(\vec{G})$ such that $(u, v) \in E(\vec{G})$.

We will use the following classical result of \citet{Hakimi65}. 

\begin{thm} [{\citep{Hakimi65}}]  \label{thm:Hakimi} For any integer $d \geqslant 0$, a graph $G$ has maximum density at most $d$ if and only if there exists an orientation $\vec{G}$ of $G$ such that $|N_{\vec{G}}^{-}(v)| \leqslant d$ for each $v \in V(G)$.
\end{thm}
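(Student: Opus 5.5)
The statement is an equivalence, so I will prove the two directions separately: the forward direction by Hall's theorem, the reverse by double counting.

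\emph{Orientation implies density.} Suppose $\vec{G}$ is an orientation with $|N_{\vec{G}}^{-}(v)| \leqslant d$ for every $v$. Let $H$ be any subgraph of $G$ with $V(H) \neq \emptyset$. Every edge of $H$ has exactly one head, and that head lies in $V(H)$. Counting edges of $H$ by their heads gives
$$|E(H)| \leqslant \sum_{v \in V(H)} |N_{\vec{G}}^{-}(v)| \leqslant d|V(H)|.$$
So $H$ has edge density at most $d$. The empty subgraph has density $0$ by definition, so $G$ has maximum density at most $d$.

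\emph{Density implies orientation.} Here the task is to assign each edge to one of its endpoints (its head) so that each vertex receives at most $d$ edges. I will model this as a bipartite matching problem and apply Hall's theorem. Build an auxiliary bipartite graph $B$ as follows.
\begin{itemize}
\item One side is $E(G)$.
\item The other side is $V(G) \times \{1,\dots,d\}$, that is, $d$ copies of each vertex.
\item Each edge $uv \in E(G)$ is joined in $B$ to all $2d$ copies of $u$ and of $v$.
\end{itemize}
If $B$ has a matching saturating $E(G)$, then I orient each edge towards the endpoint whose copy it is matched to. Each vertex has only $d$ copies, so each vertex is the head of at most $d$ edges, as required. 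The case $d=0$ fits this scheme: density $0$ forces $E(G)=\emptyset$, so the empty matching and the empty orientation work.

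The only real step is checking Hall's condition. Take any $F \subseteq E(G)$, and let $H$ be the subgraph of $G$ formed by the edges of $F$ and their endpoints. The neighbourhood of $F$ in $B$ consists of all copies of vertices of $H$, so it has size $d|V(H)|$. Since $G$ has maximum density at most $d$, we get $|F| = |E(H)| \leqslant d|V(H)|$. Hence Hall's condition holds and the saturating matching exists.

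There is no genuine obstacle: the only idea needed is the choice of $B$, after which Hall's theorem finishes the argument.
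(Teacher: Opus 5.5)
Your proof is correct and complete. The paper gives no proof of this statement: it is quoted from Hakimi and used as a black box, so there is no in-paper argument to compare against. Your Hall's-theorem reduction, with $d$ copies of each vertex on one side and $E(G)$ on the other, is the standard proof. The Hall condition check is exactly right: the neighbourhood of $F$ has size $d|V(H)|$, and the density hypothesis bounds $|F|=|E(H)|$ by this. The head-counting argument for the converse is also fine. It uses simplicity only to identify the number of edges with head $v$ with $|N^{-}_{\vec{G}}(v)|$, which is harmless since the paper works with simple graphs.

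For comparison, the other common route avoids Hall's theorem and is directly algorithmic. Start from an arbitrary orientation. While some vertex $v$ has in-degree greater than $d$, take a vertex $u$ of in-degree less than $d$ that has a directed path to $v$, and reverse that path. This lowers the total excess in-degree by one. If no such $u$ exists, let $X$ be the set of vertices with a directed path to $v$. Every vertex of $X$ has in-degree at least $d$, and $v$ has more than $d$. Every arc entering $X$ starts in $X$, so $G[X]$ has more than $d|X|$ edges, contradicting the density bound. This is essentially the augmenting-path proof of Hall's theorem unwound. It buys an explicit polynomial-time procedure; your version is shorter once Hall's theorem is taken as given.
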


Let $G$ be a graph, $J$ be a minor of $G$ and $(S_{i} : i \in V(J))$ be a model of $J$ in $G$. For each edge $ij \in E(J)$, let $Q_{ij}$ be a path in $G[S_{i} \cup S_{j}]$ with one endpoint in $S_{i}$ and another endpoint in $S_{j}$. We say that such a collection of paths $(Q_{ij} : ij \in E(J))$ \defn{represents} the model $(S_{i} : i \in V(J))$ if for each $i \in V(J)$, the graph $G[S_{i} \cap \bigcup_{ij \in E(J)}Q_{ij}]$ is connected. Let $\vec{G}$ be an orientation of $G$, and let $J'$ be a subgraph of $J$. A \defn{junction} in $J'$ is a pair of edges $((a, b), ij)$ such that $(a, b) \in E(\vec{G})$, $ij \in E(J')$, $b \in Q_{ij}$ and $a \in S_{\ell}$ for some $\ell \in V(J') \setminus \{i, j\}$. See \cref{figure:junction}.

\begin{figure}[H]
        \centering
        \scalebox{1.3}{\includegraphics{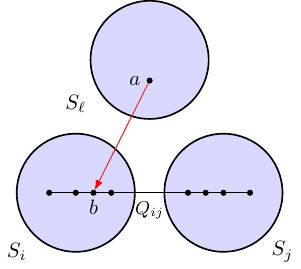}}
        \caption{A junction in $J'$: a pair of edges $(a, b) \in E(\vec{G})$ and $ij \in E(J')$ such that $b \in Q_{ij}$ and $a \in S_{\ell}$. The vertices $i, j, \ell \in V(J')$ are distinct.}
        \label{figure:junction}
\end{figure}

We say that $J'$ is \defn{junction-free} if there exists no junction in $J'$. Note that the definition of `junction' depends on $\vec{G}$, $J$, the model $(S_{i} : i \in V(J))$, the collection of paths $(Q_{ij} : ij \in E(J))$ that represents this model, and the subgraph $J'$ of $J$.

We are now ready to state our first lemma. The proof uses a probabilistic method similar to one described by \citet{Sharir03}. Rom Pinchasi (see \citep[Lemma~4.1]{AFPS14}) used this approach to bound the edge density of $k$-cover-planar graphs. Using the same approach, \citet{Wood25} bounded the edge density of $k$-gap-cover-planar graphs, which  is an essential ingredient in his proof that $k$-gap-cover-planar graphs have linear expansion. We use this result in \cref{section:stringgraphs} as part of our alternative proof that sparse string graphs have linear expansion. See \cref{section:stringgraphs} for definitions of $k$-cover-planar and $k$-gap-cover-planar graphs.

\begin{lem} \label{lem:probabilistic} Let $J$ be a minor of a graph $G$ and $(S_{i} : i \in V(J))$ be a model of $J$ in $G$. Let $(Q_{ij} : ij \in E(J))$ be a collection of paths that represents $(S_{i} : i \in V(J))$ such that each path $Q_{ij}$ has at most $k$ vertices. Let $\vec{G}$ be an orientation of $G$ such that $|N_{\vec{G}}^{-}(v)| \leqslant d$ for each $v \in V(\vec{G})$. Suppose that every junction-free subgraph of $J$ has edge density at most some real number $\beta$. Then $J$ has edge density at most $\beta e(kd + 1)$.
\end{lem}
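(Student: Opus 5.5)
Sample a random subset $X \subseteq V(J)$, including each vertex independently with probability $p$, and let $J' = J[X]$. Delete from $J'$ every edge $ij$ that lies in some junction of $J'$, and call the resulting graph $J''$. Then $J''$ is junction-free: a junction $((a,b),ij)$ in $J''$ requires $ij \in E(J'')$ and $a \in S_\ell$ with $\ell \in V(J'') = V(J')$, so it is also a junction in $J'$, and hence $ij$ would have been deleted. By hypothesis, $|E(J'')| \leqslant \beta |X|$, so $\mathbb{E}|E(J'')| \leqslant \beta p |V(J)|$. The plan is to lower-bound $\mathbb{E}|E(J'')|$ in terms of $|E(J)|$ and then optimise $p$.

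Fix an edge $ij \in E(J)$. It survives in $J''$ if $i, j \in X$ and no $\ell \in X \setminus \{i,j\}$ creates a junction with $ij$. Such an $\ell$ must contain the tail $a$ of some arc $(a,b)$ with $b \in Q_{ij}$. The path $Q_{ij}$ has at most $k$ vertices, each with in-degree at most $d$, so there are at most $kd$ such arcs. Each tail lies in at most one branch set, since the branch sets are disjoint. Hence the set $L_{ij}$ of indices $\ell \notin \{i,j\}$ that could form a junction with $ij$ satisfies $|L_{ij}| \leqslant kd$. The events $i \in X$, $j \in X$ and $\ell \notin X$ for $\ell \in L_{ij}$ are independent, so
\[
\Pr[ij \in E(J'')] \geqslant p^2 (1-p)^{kd}.
\]
Therefore $p^2(1-p)^{kd}|E(J)| \leqslant \beta p |V(J)|$, that is, the edge density of $J$ is at most $\beta / \bigl(p(1-p)^{kd}\bigr)$.

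Take $p = 1/(kd+1)$. Then $(1-p)^{kd} = \bigl(1 - \tfrac{1}{kd+1}\bigr)^{kd} \geqslant e^{-1}$, using the standard inequality $(1-1/n)^{n-1} \geqslant 1/e$ with $n = kd+1$. This gives the edge density bound $\beta e (kd+1)$. The case $d=0$ or $k=0$ gives $p=1$, and the same computation holds with $X = V(J)$.

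The main point to check carefully is the junction-removal step. The hypothesis applies to subgraphs of $J$, so I must make sure that $J''$ (on vertex set $X$) is junction-free as a subgraph. This is exactly the monotonicity argument above: junctions in $J''$ only involve vertices and edges that are present in $J'$. The representing property of the paths $Q_{ij}$ is not needed here; it is presumably used elsewhere to bound $\beta$.
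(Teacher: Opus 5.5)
Your proof is correct and follows the paper's argument essentially exactly. Your $J''$ is the same graph as the paper's $J'$: an edge $ij$ is deleted precisely when some vertex of $R_{ij}$ is sampled. The bound $|R_{ij}|\leqslant kd$, the choice $p=1/(kd+1)$ and the final estimate also match, and your pointwise bound $|E(J'')|\leqslant \beta|X|$ combined with linearity of expectation is a slightly cleaner justification of the paper's step $m^*/n^*\leqslant\beta$.
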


\begin{proof} We may assume that $V(J) \neq \emptyset$, otherwise \cref{lem:probabilistic} is trivial. For each $ij \in E(J)$, define $R_{ij}$ to be the set of vertices $\ell \in V(J) \setminus \{i, j\}$ such that there exists a junction $((a, b), ij)$ in $J$ with $a \in S_{\ell}$ (so $b \in Q_{ij}$ and $(a, b) \in E(\vec{G})$). Observe that $|R_{ij}| \leqslant kd$ since $(S_i: i \in V(J))$ are pairwise disjoint, $|Q_{ij}| \leqslant k$ and $|N_{\vec{G}}^{-}(b)| \leqslant d$ for each $b \in Q_{ij}$. 

Define $\eta_{a} :=  \frac{(a + 1)^{a + 1}}{a^a}$ for each integer $a \geqslant 0$. Note that $\eta_{a} < e(a + 1)$.

Choose each vertex of $J$ independently with
probability $p := \frac{1}{kd + 1}$. Let $J'$ be the subgraph of $J$ where $V(J')$ is the set of chosen vertices, and $E(J')$ is the set of edges $ij \in E(J)$ such that $i$ and $j$ are chosen, but no vertex of $R_{ij}$ is chosen. By definition of $R_{ij}$, every possible graph $J'$ is junction-free. By assumption, every possible non-empty graph $J'$ satisfies $|E(J')|/|V(J')| \leqslant \beta$. Let $n^*$ and $m^*$ be the expected value of $|V(J')|$ and $|E(J')|$ respectively. Since the number of possible graphs $J'$ is finite, $\frac{m^*}{n^*} \leqslant \beta$. By definition, $n^* = p|V(J)|$. The probability that an edge $ij \in E(J)$ is in $J'$ equals $p^{2}(1 - p)^{|R_{ij}|} \geqslant p^{2}(1 - p)^{kd}$. Therefore $m^* \geqslant p^{2}(1 - p)^{kd}|E(J)|$. Hence $p^{2}(1 - p)^{kd}|E(J)| \leqslant m^* = \frac{m^*}{n^{*}}n^* = \frac{m^*}{n^{*}}p|V(J)|$. By the choice of $p$, 
\begin{equation*}
\frac{|E(J)|}{|V(J)|} \leqslant 
\frac{m^*}{n^{*}}\frac{1}{p(1 - p)^{kd}} =
\frac{m^*}{n^{*}}\eta_{kd} <
\frac{m^*}{n^{*}}e(kd + 1) \leqslant
\beta e(kd + 1). \qedhere
\end{equation*}
\end{proof}

\cref{lem:probabilistic} or its variants are useful for establishing tight bounds on the expansion of various graph classes. For example, let $\mathcal{G}$ be a sparse class of graphs, so there exists a non-negative integer $d$ such that the graphs in $\mathcal{G}$ have maximum density at most $d$. By \cref{thm:Hakimi}, for any graph $G \in \mathcal{G}$, there exists an orientation $\vec{G}$ of $G$ such that $|N_{\vec{G}}^{-}(v)| \leqslant d$ for each $v \in V(\vec{G})$.  Let $J$ be an $r$-shallow minor of a graph $G$ and $(S_{i} : i \in V(J))$ be an $r$-shallow model of $J$ in $G$. Let $(Q_{ij} : ij \in E(J))$ be a collection of paths that represents $(S_{i} : i \in V(J))$ such that each path $Q_{ij}$ has $\mathcal{O}(r)$ vertices. If every junction-free subgraph of $J$ has edge density bounded by a function independent of $r$, then by \cref{lem:probabilistic}, $J$ has edge density $\mathcal{O}_d(r)$, implying that $\mathcal{G}$ has linear expansion. In particular, this is how we use \cref{lem:probabilistic} to prove \cref{main:rigs}.

Note that some assumptions in \cref{lem:probabilistic} can be relaxed. In particular, the assumptions `$G[S_{i} \cap \bigcup_{ij \in E(J)}Q_{ij}]$ is connected' and `each path $Q_{ij}$ has one endpoint in $S_{i}$ and another in $S_{j}$' are not needed. Moreover, $(Q_{ij} : ij \in E(J))$ can be vertex-sets, not necessarily paths. Also, the assumption `$|N_{\vec{G}}^{-}(v)| \leqslant d$ for each $v \in V(\vec{G})$' can be relaxed to `for each $ij \in E(J)$ and for each $b \in Q_{ij}$, there are at most $d$ vertices $a \in V(G) \setminus (S_{i} \cup S_{j})$ such that $(a, b) \in E(\vec{G})$'. We present \cref{lem:probabilistic} in the described setting since it is more intuitive this way, and this matches its application in the proof of \cref{main:rigs}.

\subsection{Model Construction Lemma} \label{section:secondlemma}

We now prepare for our second lemma.

Let $G$ be a region intersection graph over a graph $H$. So there exists a collection $(A_{v} \subseteq H: v \in V(G))$ of non-empty trees in $H$ such that $uv \in E(G)$ if and only if $V(A_u) \cap V(A_v) \neq \emptyset$. Note that every tree $A_{v}$ is a subgraph of $H$, but not necessarily induced. Let $J$ be a minor of $G$ and $(S_{i} : i \in V(J))$ be a model of $J$ in $G$. For each $i \in V(J)$, fix a vertex $c_{i} \in S_{i}$.

We choose a collection of paths that represents the model $(S_{i} : i \in V(J))$ in the following specific way. For each $i \in V(J)$, let $T_{i}$ be a tree of $G[S_{i}]$ rooted at $c_{i}$ such that $V(T_{i}) = S_{i}$. For each $a \in S_{i} \setminus \{c_{i}\}$, let $a^{\uparrow}$ be the parent of $a$ in $T_{i}$. Let $\preccurlyeq$ be an arbitrary vertex ordering of~$J$. Let $ij \in E(J)$ be an edge with $i \preccurlyeq j$. To define a path $Q_{ij}$, we choose an edge $xy \in E(G)$ with $x \in S_{i}$ and $y \in S_{j}$. Since $(S_{i} : i \in V(J))$ is a model, such an edge $xy$ exists. To choose $xy$, we consider three cases.

\textbf{Case 1.} $c_{i}$ is adjacent to no vertex of $S_{j}$:

Let $\alpha$ be the minimal integer such that there exists an edge $x'y' \in E(G)$ with $x' \in S_{i}$, $y' \in S_{j}$, and $\dist_{G[S_{i}]}(c_{i}, x') = \alpha$. Choose $x$ and $y$ such that $\dist_{G[S_{i}]}(c_{i}, x) = \alpha$ and $\dist_{A_{x}}(A_{x} \cap A_{x^{\uparrow}}, A_{x} \cap A_{y})$ is minimised. By assumption, $x \neq c_{i}$, and hence $x^{\uparrow}$ exists; so this choice is well-defined.

\textbf{Case 2.} $c_{i}$ is adjacent to a vertex of $S_{j}$, but $c_{i}c_{j} \notin E(G)$:

Let $x := c_{i}$. Choose $y$ such that (a) $\dist_{G[S_{j}]}(c_{j}, y)$ is minimised, and (b) (subject to (a)) $\dist_{A_{y}}(A_{y} \cap A_{y^{\uparrow}}, A_{y} \cap A_{c_{i}})$ is minimised. By assumption, $y \neq c_{j}$, and hence $y^{\uparrow}$ exists; so this choice is well-defined.

\textbf{Case 3.} $c_{i}c_{j} \in E(G)$:

In this case, let $x := c_{i}$ and $y := c_{j}$.

Thus the edge $xy \in E(G)$ is chosen. Let $P_{i, j}$ be the path in $T_{i}$ between $c_{i}$ and $x$, and let $P_{j, i}$ be the path in $T_{j}$ between $y$ and $c_{j}$. Define $Q_{ij}$ to be the path that consists of the concatenation of  $P_{i, j}$, the edge $xy$, and $P_{j, i}$ (see \cref{figure:path}).

\begin{figure}[h]
        \centering
        \scalebox{1.3}{\includegraphics{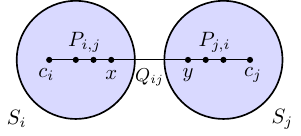}}
        \caption{$Q_{ij}$ is the path that consists of the concatenation of  $P_{i, j}$, the edge $xy$, and $P_{j, i}$.}
        \label{figure:path}
\end{figure}

The collection of paths $(Q_{ij} : ij \in E(J))$ defined above \defn{properly represents} the model $(S_{i} : i \in V(J))$. For each $ij \in E(J)$, we have $S_{i} \cap Q_{ij} = P_{i, j}$ and $c_{i} \in P_{i, j}$. Therefore, for each $i \in V(J)$, the graph $G[S_{i} \cap \bigcup_{ij \in E(J)}Q_{ij}]$ is connected. Hence $(Q_{ij} : ij \in E(J))$ represents $(S_{i} : i \in V(J))$. So the definition of `junction' can be applied for $J$ and an orientation $\vec{G}$ of $G$. Note that the definition of `properly represents' depends on $G$, $H$, the collection $(A_{v} \subseteq H: v \in V(G))$, $J$, the model $(S_{i} : i \in V(J))$ of $J$ in $G$, the vertices $c_{i} \in S_{i}$, the trees $T_{i}$ rooted at $c_{i}$, and the ordering $\preccurlyeq$. 

Observe that for any subgraph $J'$ of $J$, $(S_{i} : i \in V(J'))$ is a model of $J'$ in $G$, and $(Q_{ij} : ij \in E(J'))$ properly represents $(S_{i} : i \in V(J'))$.

\begin{lem} \label{lem:model} Let $G$ be a region intersection graph over a graph $H$, $J$ be a minor of $G$, and $(S_{i} : i \in V(J))$ be a model of $J$ in $G$. Let $(Q_{ij} : ij \in E(J))$ be a collection of paths that properly represents the model $(S_{i} : i \in V(J))$. Assume that $G$ has an orientation $\vec{G}$ such that $J$ is junction-free. Let $J^*$ be the subgraph of $J$ induced by $\{v \in V(J): \deg_{J}(v) \geqslant 2\}$. Then $J^*$ is a minor\footnote{Note that $J$ might not be a minor of $H$. For example, $G = K_{2}$ is a region intersection graph over $H = K_{1}$, where $J = G$ is junction-free for the trivial orientation of $G$, but $J$ is not a minor of $H$.} of $H$.
\end{lem}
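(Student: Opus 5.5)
The plan is to build an explicit model of $J^*$ in $H$ out of the trees $A_v$. For each $i \in V(J)$, let $U_i := S_i \cap \bigcup_{ij \in E(J)} Q_{ij} = \bigcup_{j} P_{i,j}$ be the \emph{skeleton} of $S_i$. Since $U_i$ is connected in $G$ and adjacent vertices of $G$ have intersecting trees, the set $W_i := \bigcup_{v \in U_i} V(A_v)$ induces a connected subgraph of $H$. Also, for every edge $ij \in E(J)$ the edge $xy$ on $Q_{ij}$ gives $V(A_x) \cap V(A_y) \neq \emptyset$. So $W_i$ and $W_j$ touch whenever $ij \in E(J)$. The only defect is that the sets $W_i$ need not be pairwise disjoint. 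The proof therefore reduces to (1) showing that overlaps are very restricted, and (2) trimming the $W_i$ to remove the overlaps while keeping connectivity and adjacency for vertices of $J^*$.

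\textbf{Step 1: overlaps are controlled by junction-freeness.} Suppose $a \in U_\ell$ and $b \in U_i$ with $\ell \neq i$ and $V(A_a) \cap V(A_b) \neq \emptyset$, so $ab \in E(G)$. Say $b \in P_{i,j} \subseteq Q_{ij}$ and $a \in P_{\ell,m} \subseteq Q_{\ell m}$.
\begin{itemize}
\item If the edge is oriented $(a,b)$, junction-freeness forces $\ell \in \{i,j\}$, so $\ell = j$.
\item If it is oriented $(b,a)$, symmetrically $i \in \{\ell,m\}$, so $m = i$.
\end{itemize}
Hence $W_i \cap W_\ell \neq \emptyset$ only if $i\ell \in E(J)$. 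Moreover, every such contact edge has its head on the specific path $Q_{i\ell}$, lying on $P_{i,\ell}$ or on $P_{\ell,i}$. So all conflicts between $i$ and $\ell$ are ``charged'' to the single path $Q_{i\ell}$ and to the vertices of $P_{i,\ell}$ and $P_{\ell,i}$. This is exactly where the paths are close to the chosen edge $xy$.

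\textbf{Step 2: trimming.} For each edge $i\ell$ with $i \preccurlyeq \ell$, I would resolve the conflicts along $Q_{i\ell}$ by assigning the shared vertices of $H$ to one side. The natural rule is to cut each tree near the chosen contact point:
\begin{itemize}
\item keep in $B_i$ the part of $A_x$ on a shortest path from $A_x \cap A_{x^\uparrow}$ to $A_x \cap A_y$;
\item give $B_\ell$ the part of $A_y$ from $A_y \cap A_x$ up towards $A_y \cap A_{y^\uparrow}$;
\item in Cases 2 and 3, treat $c_i$ and $c_j$ analogously.
\end{itemize}
Here the minimality in the choice of $x$ and $y$ is used. Minimising $\dist_{G[S_i]}(c_i,x)$ ensures that no earlier vertex of $P_{i,j}$ touches $S_j$. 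Minimising the $\dist_{A_x}$ term ensures that the retained segment of $A_x$ meets no other $A_{y'}$ with $y' \in S_j$ before reaching $A_y$. Together these should show that the trimmed sets are disjoint, remain connected (each is a union of tree segments chained along $T_i$ from $c_i$), and still touch across each edge $i\ell$.

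\textbf{Step 3: the role of $J^*$ and the main obstacle.} A vertex $i$ of degree $1$ in $J$ may have its whole region consumed. The example in the footnote ($G = K_2$ over $H = K_1$) shows that such vertices cannot be kept. For $\deg_J(i) \geq 2$, the set $W_i$ contains the tree $A_{c_i}$ plus at least two branches, so after trimming some vertex of $H$ survives in $B_i$ and connectivity through $A_{c_i}$ is preserved. Deleting the degree-$1$ vertices and taking $(B_i : i \in V(J^*))$ then gives a model of $J^*$ in $H$.

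I expect the main obstacle to be Step 2. The difficulty is proving that the trimming rule for one edge $i\ell$ does not destroy connectivity or adjacency needed for another edge $ik$, especially at the shared root tree $A_{c_i}$ in Cases 2 and 3, where $c_i$ is used by many paths. I would first handle this by a case analysis over Cases 1–3 for each edge, using the minimality conditions to locate every intersection point inside the designated segments. If that fails, I would fall back to a contraction argument: start from $H$, contract each $W_i$ greedily in the order $\preccurlyeq$, and verify with Step 1 that contracted sets only collide along edges of $J$.
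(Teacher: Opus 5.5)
Your Step 1 is correct; it is the same junction argument the paper uses throughout. The gap is in Step 2. It rests on the claim that all conflicts between $i$ and $\ell$ occur ``where the paths are close to the chosen edge $xy$'', and that claim is false. Step 1 only places the \emph{head} of a conflicting arc on $Q_{i\ell}$; its tail can be any vertex of the other skeleton. Take $i \prec \ell$. Let $t$ be an interior vertex of $P_{\ell,i}$ (possibly a child of $c_\ell$) that lies on no other path $P_{\ell,m}$. Let $z$ be an interior vertex of some $P_{i,k}$ with $k \neq \ell$. Then the arc $(z,t)$ is not a junction, so $A_z$ and $A_t$ may intersect. Your rule trims only $A_x$ and $A_y$, so $A_z \subseteq B_i$ and $A_t \subseteq B_\ell$, and disjointness fails. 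The minimality conditions on $x,y$ cannot rule this out. They only constrain how the trees of $S_\ell$ meet the forward piece along $P_{i,\ell}$ (the paper's property $(\star)$), and how $P_{\ell,i}$ meets $A_{c_i}$. They say nothing about how $P_{\ell,i}$ meets the parts of $B_i$ serving other edges. Your contraction fallback does not say how overlaps are resolved, so it does not fill the gap either.

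The missing idea is an asymmetric, order-dependent construction. The later endpoint of each edge must be truncated against the \emph{entire} already-completed branch set of the earlier endpoint, not against $A_x$.
First, the paper fixes forward pieces $B_{i,j}$ for $i \prec j$, trimmed only inside $A_x$ (or equal to $V(A_{c_i})$ in Case~2). It shows that the sets $B_i^< = V(A_{c_i}) \cup \bigcup_{j} B_{i,j}$ are pairwise disjoint.
Second, it processes $j$ in the order $\preccurlyeq$. For each $i \prec j$ it walks along $P_{j,i}$ from $c_j$ and stops at the first vertex $z$ whose tree meets $B_i$. It keeps the trees before $z$, plus a shortest segment of $A_z$ up to but excluding the contact. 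If no tree on $P_{j,i}$ meets $B_i$, it keeps all of them, and adjacency comes from $B_{i,j}$ touching $A_y$.
This gives disjointness from $B_i$ and adjacency at once, wherever the contact lies. Junction-freeness then shows the new pieces avoid every other $B_{i'}$ with $i' \prec j$ and every $B_k^<$ with $k \succ j$.
Finally, the role of $\deg_J \geq 2$ is not that ``something survives trimming''. It puts $c_i$ on two paths $Q_{ik}$, so no arc from another branch set can point into $c_i$. Hence Case~3 never occurs in $J^*$, and the trees $A_{c_i}$ for $i \in V(J^*)$ are pairwise disjoint. Each $A_{c_i}$ can then be kept whole as the hub that makes $B_i$ connected.
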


\begin{proof} We employ all the notation from the start of \cref{section:secondlemma} that contributes to the definition of `properly represents'. In particular, we use notation $(A_{v} \subseteq H: v \in V(G))$, $c_{i}$, $T_{i}$, $\preccurlyeq$, $\alpha$, $x$, $y$, $P_{i, j}$, $P_{j, i}$.

We further assume that $V(J^*) \neq \emptyset$, otherwise \cref{lem:model} is trivial. For the sake of convenience, assume that $V(J^*) = \{1, 2, \dots, |V(J^*)|\}$ where $i \leqslant j$ if and only if $i \preccurlyeq j$ for any $i, j \in V(J^*)$.

\paragraph{Proof sketch:} The proof constructs branch sets $B_{1}, \dots, B_{|V(J^*)|} \in V(H)$ of a model of $J^*$ in $H$. To do so, for each edge $ij \in E(J^*)$ with $i < j$ we construct an auxiliary set $B_{i, j} \subseteq \bigcup_{t \in P_{i, j}}V(A_{t})$. Informally, $B_{i, j}$ will be the part of the branch set $B_{i}$ that is `responsible' from the viewpoint of $B_{i}$ for touching $B_{j}$ (although it is not necessary that $B_{i, j}$ touches~$B_{j}$). Having such sets $B_{i, j}$ for each edge $ij \in E(J^*)$ with $i < j$ constructed, we then define $B_{i}^{<}:= V(A_{c_{i}}) \cup  \bigcup_{ij \in E(J^*): i < j}B_{i, j}$ for each $i \in \{1, \dots, |V(J^*)|\}$. We show that $B_{1}^{<}, \dots, B_{|V(J^*)|}^{<}$ are connected in $H$ and pairwise disjoint. Using the construction of the sets $B_{i}^{<}$, we inductively construct a set $B_{j, i} \subseteq \bigcup_{t \in P_{j, i}}V(A_{t})$ for each edge $ij \in \{1, \dots, |V(J^*)|\}$ with $i < j$. Informally, $B_{j, i}$ will be the part of the branch set $B_{j}$ that is `responsible' from the viewpoint of $B_{j}$ for touching~$B_{i}$. We define $B_{j}^{>} := \bigcup_{ij \in E(J^*): i < j}B_{j, i}$ and $B_{j}:= B_{j}^{<} \cup B_{j}^{>}$. Then we inductively show that $B_{1}, \dots, B_{|V(J^*)|}$ form branch sets of a model of $J^*$ in~$H$.

\begin{claim} \label{claim:basic} $\{c_{1}, \dots, c_{|V(J^*)|}\}$ is an independent set in $G$, and thus $V(A_{c_{1}}), \dots, V(A_{c_{|V(J^*)|}})$ are pairwise disjoint.
\end{claim}

\begin{proof} Suppose for the sake of contradiction that $c_{i}c_{j} \in E(G)$ for some distinct $i, j \in \{1, 2, \dots, |V(J^*)|\}$. Since $i \in V(J^*)$, we have $\deg_{J}(i) \geqslant 2$, and hence there exists an edge $ik \in E(J)$ such that $k \neq j$. Similarly, there exists an edge $j\ell \in E(J)$ such that $i \neq \ell$. Recall that $c_{i} \in Q_{ik}$ and $c_{j} \in Q_{j\ell}$. If $(c_{i}, c_{j}) \in E(\vec{G})$ then $((c_{i}, c_{j}), j\ell)$ is a junction in $J$. Otherwise, $(c_{j}, c_{i}) \in E(\vec{G})$, and so $((c_{j}, c_{i}), ik)$ is a junction in $J$. This contradicts our starting assumption that $J$ is junction-free.
\end{proof}

\paragraph{Construction of $\bm{B_{i,j}}$ for $\bm{i < j}$:}

 Let $ij \in E(J^*)$ be an edge of $J^*$ such that $i < j$. To define $B_{i, j}$, we consider cases that correspond to the cases in the definition of $Q_{ij}$. By \cref{claim:basic}, Case 3 (where $c_{i}c_{j} \in E(G)$) does not occur. Recall the notation $x, y, P_{i, j}, P_{j, i}$ from the definition of $Q_{ij}$ (see \cref{figure:path}).

\textbf{Case 1.} $c_{i}$ is adjacent to no vertex of $S_{j}$:

In this case $x \neq c_{i}$, and hence $x^{\uparrow}$ exists. As illustrated in \cref{figure:Bij}, let $B_{i, j}$ be the set of vertices of $H$ that consists of $\bigcup_{t \in P_{i, j} \setminus \{x\}}V(A_{t})$ and the vertices of $A_{x}$ that constitute the shortest path in $A_{x}$ between $V(A_{x}) \cap V(A_{x^{\uparrow}})$ and $V(A_{x}) \cap V(A_{y})$ except the endpoint in $V(A_{x}) \cap V(A_{y})$. Note that there exists an edge of $H$ between $B_{i, j}$ and $V(A_{y})$, drawn red in \cref{figure:Bij}.

\begin{figure}[h]
        \centering
        \scalebox{1.3}{\includegraphics{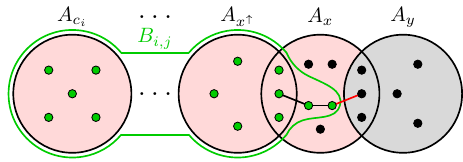}}
        \caption{Construction of $B_{i, j}$ in Case 1. The vertices of $B_{i, j}$ are green. An edge of $H$ between $B_{i, j}$ and $V(A_{y})$ is red.}
        \label{figure:Bij}
\end{figure}

Recall the choice of $x$ and $y$ in the corresponding Case 1 in the definition of $Q_{ij}$ and the notation of $\alpha$. Since $\dist_{G[S_{i}]}(c_{i}, x) = \alpha$, we have $(\bigcup_{t \in P_{i, j} \setminus \{x\}}V(A_{t})) \cap \bigcup_{r \in S_{j}} V(A_{r}) = \emptyset$. Since $\dist_{A_{x}}(A_{x} \cap A_{x^{\uparrow}}, A_{x} \cap A_{y})$ is minimised, no vertex of the shortest path in $A_{x}$ between $V(A_{x}) \cap V(A_{x^{\uparrow}})$ and $V(A_{x}) \cap V(A_{y})$ except the endpoint in $V(A_{x}) \cap V(A_{y})$ belongs to $\bigcup_{r \in S_{j}} V(A_{r})$. Thus in this case, \begin{equation*} \label{specialproperty} \tag{$\star$} B_{i, j} \cap \bigcup_{r \in S_{j}} V(A_{r}) = \emptyset.
\end{equation*}

\textbf{Case 2.} $c_{i}$ is adjacent to a vertex of $S_{j}$, but $c_{i}c_{j} \notin E(G)$:

Recall that in this case $x = c_{i}$, so $P_{i, j}$ consists of the single vertex $x$. Define $B_{i, j} := V(A_{c_{i}})$.

Observe that in each case, $V(A_{c_{i}}) \subseteq B_{i, j}$, and $B_{i, j} \subseteq \bigcup_{t \in P_{i, j}}V(A_{t})$, and $H[B_{i, j}]$ is connected.

\paragraph{Definition and properties of $\bm{B_{i}^{<}}$:} For each $i \in V(J^*)$, let $$B_{i}^{<}:= V(A_{c_{i}}) \cup \bigcup_{ij \in E(J^*): i < j}B_{i, j}.$$ Note that if there is no edge $ij \in E(J^*)$ such that $i < j$, then $B_{i}^{<} = V(A_{c_{i}})$. Recall that $V(A_{c_{i}}) \subseteq B_{i, j}$ and $H[B_{i, j}]$ is connected for each edge $ij \in E(J^*)$ such that $i < j$. Hence $H[B_{i}^{<}]$ is connected for each $i \in V(J^*)$.

\begin{claim} \label{claim:<disjointness} $B_{1}^{<}, ..., B_{|V(J^*)|}^{<}$ are pairwise disjoint.
\end{claim}

\begin{proof}

Assume for the sake of contradiction that $B_{i}^{<} \cap B_{j}^{<} \neq \emptyset$ for some $i, j \in V(J^*)$ with $i < j$.

\textbf{Case 1.} $V(A_{c_{i}}) \cap B_{j}^{<} \neq \emptyset$:

By \cref{claim:basic}, $V(A_{c_{i}}) \cap V(A_{c_{j}}) = \emptyset$. So there exists an edge $j\ell \in E(J^*)$ such that $j < \ell$ and $V(A_{c_{i}}) \cap (B_{j, \ell} \setminus V(A_{c_{j}})) \neq \emptyset$. Since $B_{j, \ell} \subseteq \bigcup_{t \in P_{j, \ell}}V(A_{t})$,  there exists $t \in P_{j, \ell}$ such that $V(A_{t}) \cap V(A_{c_{i}}) \neq \emptyset$, implying $tc_{i} \in E(G)$. Since $i \in V(J^*)$, we have $\deg_{J}(i) \geqslant 2$, and hence there exists an edge $ik \in E(J)$ such that $k \neq j$, and so $c_{i} \in Q_{ik}$. If $(t, c_{i}) \in E(\vec{G})$, then $((t, c_{i}), ik)$ is a junction in $J$. If $(c_{i}, t) \in E(\vec{G})$ then $((c_{i}, t), j\ell)$ is a junction in $J$ because $i, j, \ell$ are distinct (since $i < j < \ell$).

\textbf{Case 2.} $V(A_{c_{i}}) \cap B_{j}^{<} = \emptyset$:

Let $h \in B_{i}^{<} \cap B_{j}^{<}$. By assumption and the definition of $B_{i}^{<}$, there exists an edge $ik \in E(J^*)$ such that $i < k$ and $h \in (B_{i, k} \setminus V(A_{c_{i}})) \cap B_{j}^{<}$. Since $B_{i, k} \subseteq \bigcup_{t \in P_{i, k}}V(A_{t})$, there exists $t \in P_{i, k}$ such that $h \in V(A_{t}) \cap B_{j}^{<}$.

First, suppose that $h \in V(A_{c_{j}})$. So $h \in V(A_{t}) \cap V(A_{c_{j}})$, and hence $tc_{j} \in E(G)$. Since $j \in V(J^*)$, we have $\deg_{J}(j) \geqslant 2$, and hence there exists an edge $j\ell \in E(J)$ such that $\ell \neq i$. If $(t, c_{j}) \in E(\vec{G})$ then $((t, c_{j}), j\ell)$ is a junction in $J$. Therefore $(c_{j}, t) \in E(\vec{G})$. Since $((c_{j}, t), ik)$ is not a junction in $J$, we have $k = j$. Recall that $h \in V(A_{c_{j}})$ and $h \in B_{i, k} = B_{i, j}$, and hence $B_{i, j} \cap V(A_{c_{j}}) \neq \emptyset$. If Case 1 in the construction of $B_{i, j}$ holds, then this violates \cref{specialproperty}. Otherwise, Case 2 in the construction of $B_{i, j}$ holds, then $B_{i, j} = V(A_{c_{i}})$, implying $V(A_{c_{i}}) \cap V(A_{c_{j}}) \neq \emptyset$, contradicting \cref{claim:basic}.

So $h \notin V(A_{c_{j}})$. Since $h \in B_{j}^{<}$, there exists an edge $j\ell \in E(J^*)$ such that $j < \ell$ and $h \in B_{j, \ell} \setminus V(A_{c_{j}})$. Since $B_{j, \ell} \subseteq \bigcup_{r \in P_{j, \ell}}V(A_{r})$, there exists $r \in P_{j, \ell}$ such that $h \in V(A_{t}) \cap V(A_{r})$, implying $tr \in E(G)$. If $(t, r) \in E(\vec{G})$ then $((t, r), j\ell)$ is a junction in $J$ because $i, j, \ell$ are distinct (since $i < j < \ell$). Therefore $(r, t) \in E(\vec{G})$. Since $((r, t), ik)$ is not a junction in $J$, we have $k = j$. Recall that $h \in V(A_{r})$ and $h \in B_{i, k} = B_{i, j}$, and $h \notin V(A_{c_i})$. So $(B_{i, j} \cap V(A_{r})) \setminus V(A_{c_{i}}) \neq \emptyset$. Recall that $r \in P_{j, \ell} \subseteq S_{j}$. If Case 1 in the construction of $B_{i, j}$ holds, then this violates \cref{specialproperty}. Otherwise, Case 2 in the construction of $B_{i, j}$ holds, then $B_{i, j} = V(A_{c_{i}})$, a contradiction.
\end{proof}

\paragraph{\boldmath Construction of $B_{1}, \dots, B_{|V(J^*)|}$, base case:}

We now expand $B_{1}^{<}, ..., B_{|V(J^*)|}^{<}$ into `real' branch sets $B_{1}, \dots, B_{|V(J^*)|}$. We do this inductively. By induction on $j \in \{1, \dots, |V(J^*)|\}$, we construct a set $B_{j} \subseteq V(H)$ such that: 
\begin{enumerate}[label=(\roman*)]
    \item \label{induction1} $B_{1}, \dots, B_{j}, B_{j + 1}^{<}, \dots, B_{|V(J^*)|}^{<}$ are pairwise disjoint, non-empty and connected in $H$,

    \item \label{induction2} $B_{1}, \dots, B_{j}$ constitute branch sets of a model of $J^*[\{1, \dots, j\}]$ in $H$, and

    \item \label{induction3} $B_{i}^{<} \subseteq B_{i}$ for any $i \in \{1, \dots, j\}$.
\end{enumerate}

Note that for $j = |V(J^*)|$, item \ref{induction2} shows that $J^*$ is a minor of $H$, as required.

In the base case, define $B_{1} := B_{1}^{<}$. Recall that $H[B_{i}^{<}]$ is non-empty and connected for each~$i \in V(J^*)$. By \cref{claim:<disjointness}, the induction statement holds for this choice.

\paragraph{Construction of $\bm{B_{j,i}}$ for $\bm{i < j}$:}

Fix $j \geqslant 2$ (through to the end of the proof). Assume that the sets $B_{1}, \dots, B_{j - 1}$ are constructed such that \ref{induction1}, \ref{induction2} and \ref{induction3} hold. Our goal is to construct~$B_{j}$.

For each $ij \in E(J^*)$ such that $i < j$, we construct an auxiliary set $B_{j, i} \subseteq V(H)$. To do so, we consider two cases.

\textbf{Case A.} $B_{i} \cap \bigcup_{t \in P_{j, i}}V(A_{t}) = \emptyset$:

Define $B_{j, i} := \bigcup_{t \in P_{j, i}}V(A_{t})$.

\textbf{Case B.} $B_{i} \cap \bigcup_{t \in P_{j, i}}V(A_{t}) \neq \emptyset$:

Let $z$ be the closest vertex to $c_{j}$ in the path $P_{j, i}$ such that $B_{i} \cap V(A_{z}) \neq \emptyset$ (by assumption, such vertex $z$ exists). By item \ref{induction1} of the induction hypothesis, $B_{i} \cap B_{j}^{<} = \emptyset$. Recall that $V(A_{c_{j}}) \subseteq B_{j}^{<}$. Therefore, $B_{i} \cap V(A_{c_{j}}) = \emptyset$. Consequently, $z \neq c_{j}$ and hence the parent $z^{\uparrow}$ of $z$ in $P_{j, i}$ exists. Let $L_{j, i}$ be the subpath of $P_{j, i}$ between $c_{j}$ and $z^{\uparrow}$. As illustrated in \cref{figure:Bji}, let $B_{j, i}$ be the set of vertices of $H$ that consists of $\bigcup_{t \in L_{j, i}}V(A_{t})$ and the vertices of $A_{z}$ that constitute the shortest path in $A_{z}$ between $V(A_{z}) \cap V(A_{z^{\uparrow}})$ and $V(A_{z}) \cap B_{i}$ except the endpoint in $V(A_{z}) \cap V(B_{i})$.

\begin{figure}[h]
        \centering
        \scalebox{1.3}{\includegraphics{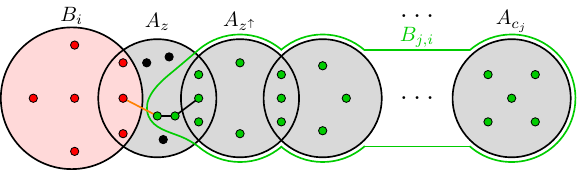}}
        \caption{Construction of $B_{j, i}$ in Case B. The vertices of $B_{j, i}$ are green. The vertices of $B_{i}$ are red. An edge of $H$ between $B_{i}$ and $B_{j, i}$ is orange.}
        \label{figure:Bji}
\end{figure}

Observe that in each case, $V(A_{c_{j}}) \subseteq B_{j, i}$, and $B_{j, i} \subseteq \bigcup_{t \in P_{j, i}}V(A_{t})$, and $H[B_{j, i}]$ is connected.

\begin{claim} \label{claim:Bji} For each edge $ij \in E(J^*)$ such that $i < j$, we have $B_{j, i} \cap B_{i} = \emptyset$, and there is an edge of $H$ between $B_{j, i}$ and $B_{i}$.
\end{claim}

\begin{proof} In Case B, both properties immediately follow from the construction of $B_{j, i}$. Now assume that Case A holds. By this assumption and the choice of $B_{j, i}$, we have $B_{j, i} \cap B_{i} = \emptyset$.

By definition of $B_{i}^{<}$, we have $B_{i, j} \subseteq B_{i}^{<}$. By item \ref{induction3} of the induction hypothesis, $B_{i}^{<} \subseteq B_{i}$. Therefore $B_{i, j} \subseteq B_{i}$.

Recall that $x$ and $y$ are the endpoints of the paths $P_{i, j}$ and $P_{j, i}$ respectively and $xy \in E(G)$. Recall that the construction of $B_{i, j}$ is based on two cases. In Case 1, there is an edge of $H$ between $B_{i, j}$ and $V(A_{y})$ by the construction of $B_{i, j}$ (see the red edge in \cref{figure:Bij}). In Case 2, $c_{i}$ is adjacent to a vertex of $S_{j}$, and hence $c_{i}y \in E(G)$ due to the choice of $y$ in the definition of $Q_{ij}$. So in Case 2, $V(A_{c_{i}}) \cap V(A_{y}) \neq \emptyset$. This condradicts the assumption of Case A because $V(A_{c_{i}}) = B_{i, j} \subseteq B_{i}$ and $y \in P_{j, i}$. Thus there is an edge of $H$ between $B_{i, j}$ and $V(A_{y})$. By the definition of $B_{j, i}$ in Case A, $V(A_{y}) \subseteq B_{j, i}$. Since $B_{i, j} \subseteq B_{i}$, there is an edge between $B_{i}$ and~$B_{j, i}$.
\end{proof}

\paragraph{Construction of $\bm{B_{j}}$:}

Let $B_{j}^{>} := \bigcup_{ij \in E(J^*): i < j}B_{j, i}$ and $B_{j}:= B_{j}^{<} \cup B_{j}^{>}$. Item \ref{induction3} of the induction is immediately satisfied.

Recall that $V(A_{c_{j}}) \subseteq  B_{j}^{<}$ and $H[B_{j}^{<}]$ is connected. Recall that for each edge $ij \in E(J^*)$ such that $i < j$, we have $V(A_{c_{j}})  \subseteq B_{j, i}$ and $H[B_{j, i}]$ is connected. Thus $H[B_{j}]$ is non-empty and connected. The following two claims prove that $B_{j}$ is disjoint from $B_{i}$ for any $i \in \{1, \dots, j - 1\}$, and that $B_{j}$ is disjoint from $B_{k}^{<}$ for any $k \in \{j + 1, \dots, |V(J^*)|\}$. This implies that item \ref{induction1} of the induction is also satisfied.

\begin{claim} \label{claim:first} $B_{j} \cap B_{i} = \emptyset$ for any $i \in \{1, \dots, j - 1\}$.
\end{claim}

\begin{proof} We prove the following auxiliary fact: for each edge $i_{0}j \in E(J^*)$ such that $i_{0} < j$, we have  $(B_{j, i_0} \setminus V(A_{c_{j}})) \cap B_{i} = \emptyset$. If $i_{0} = i$, then this follows from \cref{claim:Bji}.

Now assume that $i_{0} \neq i$. Suppose for the sake of contradiction that there exists $h \in (B_{j, i_0} \setminus V(A_{c_{j}})) \cap B_{i}$. Recall that $B_{j, i_{0}} \subseteq \bigcup_{t \in P_{j, i_{0}}}V(A_{t})$. So there exists $t \in P_{j, i_{0}}$ such that $h \in V(A_{t})$.

First, suppose that $h \in V(A_{c_i})$. So $h \in V(A_{c_i}) \cap V(A_{t})$, implying $c_{i}t \in E(G)$. Since $i \in V(J^*)$, we have $\deg_{J}(i) \geqslant 2$, and hence there exists an edge $i\ell \in E(J)$ such that $\ell \neq j$. If $(t, c_{i}) \in E(\vec{G})$ then $((t, c_{i}), i\ell)$ is a junction in $J$. Otherwise, $(c_{i}, t) \in E(\vec{G})$, and so $((c_{i}, t), ji_0)$ is a junction in $J$ because $i_{0} \neq i$.

So $h \notin V(A_{c_i})$. By definition of $B_{i}$, we have $B_{i} = V(A_{c_{i}}) \cup \bigcup_{k: ik \in E(J^*)}B_{i, k}$. Since $h \in B_{i}$ and $h \notin V(A_{c_i})$, we have $h \in B_{i, k}$ for some edge $ik \in E(J^*)$. Recall that $B_{i, k} \subseteq \bigcup_{z \in P_{i, k}}V(A_{z})$ regardless of whether $i < k$ or $k < i$. So there exists a vertex $z \in P_{i, k}$ such that $h \in V(A_{z})$. Since $h \in V(A_{z})$ and $h \in V(A_{t})$, we have $V(A_{z}) \cap V(A_{t}) \neq \emptyset$, implying $zt \in E(G)$. If $(z, t) \in E(\vec{G})$ then $((z, t), ji_{0})$ is a junction in $J$ because $i_{0} \neq i$. Therefore $(t, z) \in E(\vec{G})$. Since $((t, z), ik)$ is not a junction in $J$, we have $k = j$. Hence $h \in (B_{i, j} \setminus V(A_{c_{i}}))  \cap V(A_{t})$, implying $(B_{i, j} \setminus V(A_{c_{i}}))  \cap V(A_{t}) \neq \emptyset$. Recall the construction of $B_{i, j}$ where $i < j$. In Case 1, the property \cref{specialproperty} contradicts $(B_{i, j} \setminus V(A_{c_{i}}))  \cap V(A_{t}) \neq \emptyset$. In Case 2, we have $B_{i, j} = V(A_{c_{i}})$, a contradiction to $(B_{i, j} \setminus V(A_{c_{i}}))  \cap V(A_{t}) \neq \emptyset$. So in each case we have reached a contradiction. 

We have shown that for each edge $i_{0}j \in E(J^*)$ such that $i_{0} < j$, we have  $(B_{j, i_0} \setminus V(A_{c_{j}})) \cap B_{i} = \emptyset$. By the definition of $B_{j}^{>}$, this implies $(B_{j}^{>} \setminus V(A_{c_{j}})) \cap B_{i} = \emptyset$.  By item \ref{induction1} of the induction hypothesis, $B_{j}^{<} \cap B_{i} = \emptyset$. Recall that $V(A_{c_{j}}) \subseteq B_{j}^{<}$ and $B_{j} = B_{j}^{<} \cup B_{j}^{>}$. Thus $B_{j} \cap B_{i} = \emptyset$, as desired.
\end{proof}

\begin{claim} \label{claim:second} $B_{j} \cap B_{k}^{<} = \emptyset$ for any $k \in \{j + 1, \dots, |V(J^*)|\}$.
    
\end{claim}

\begin{proof} We prove the following auxiliary fact:  for each edge $ij \in E(J^*)$ such that $i < j$, we have $B_{j, i} \cap B_{k}^{<} = \emptyset$.

Assume for the sake of contradiction that there exists $h \in B_{j, i} \cap B_{k}^{<}$. Recall that $B_{j, i} \subseteq \bigcup_{t \in P_{j, i}}V(A_{t})$. Therefore $h \in V(A_{t})$ for some $t \in P_{j, i}$.

First, suppose that $h \in V(A_{c_{k}})$. So $h \in V(A_{c_{k}}) \cap V(A_{t})$, implying $c_{k}t \in E(G)$. If $(c_{k}, t) \in E(\vec{G})$ then $((c_{k}, t), ji)$ is a junction since $i < j < k$. Otherwise, $(t, c_{k}) \in E(\vec{G})$. Since $k \in V(J^*)$, we have $\deg_{J}(k) \geqslant 2$, and hence there exists an edge $kq \in E(J)$ such that $q \neq j$. Then $((t, c_{k}), kq)$ is a junction.

So $h \notin V(A_{c_{k}})$. Then by definition of $B_{k}^{<}$, we have $h \in B_{k, \ell}$ for some edge $k\ell \in E(J^*)$ such that $k < \ell$. So $i < j < k < \ell$. Recall that $B_{k, \ell} \subseteq \bigcup_{z \in P_{k, \ell}}V(A_{z})$. So there exists $z \in P_{k, \ell}$ such that $h \in V(A_{z})$. Therefore $h \in V(A_{z}) \cap V(A_{t})$, implying $zt \in E(G)$. If $(z, t) \in E(\vec{G})$ then $((z, t), ji)$ is a junction in $J$ because $i, j, k$ are distinct. If $(t, z) \in E(\vec{G})$ then $((t, z), k\ell)$ is a junction in $J$ because $j, k, \ell$ are distinct.

We have shown that for each edge $ij \in E(J^*)$ such that $i < j$, we have $B_{j, i} \cap B_{k}^{<} = \emptyset$. By definition of $B_{j}^{>}$, this implies that $B_{j}^{>} \cap B_{k}^{<} = \emptyset$. By \cref{claim:<disjointness}, $B_{j}^{<} \cap B_{k}^{<} = \emptyset$. Since $B_{j} = B_{j}^{<} \cup B_{j}^{>}$, we have $B_{j} \cap B_{k}^{<} = \emptyset$, as desired.
\end{proof}

As discussed before the statement of \cref{claim:first}, \cref{claim:first,claim:second} imply that item \ref{induction1} of the induction is satisfied.

By item \ref{induction2} of the induction hypothesis, $B_{1}, \dots, B_{j - 1}$ constitute branch sets of a model of $J^*[\{1, \dots, j - 1\}]$ in $H$. By \cref{claim:Bji}, for each edge $ij \in E(J^*)$ such that $i < j$, there is an edge of $H$ between $B_{j, i}$ and $B_{i}$. Therefore there is an edge of $H$ between $B_{j}$ and $B_{i}$. By \cref{claim:first}, $B_{1}, \dots, B_{j}$ constitute branch sets of a model of $J^*[\{1, \dots, j\}]$ in $H$, and item \ref{induction2} of the induction is also satisfied.

Thus all items \ref{induction1}, \ref{induction2}, \ref{induction3} of the induction are verified. For $j = |V(J^*)|$, item \ref{induction2} implies that $J^*$ is a minor of $H$, as desired.
\end{proof}

\subsection{Combining the Lemmas} \label{subsection:combination}

We now combine Lemmas \ref{lem:probabilistic} and \ref{lem:model} to prove the following reformulated version of \cref{main:rigs}.

\begin{thm} \label{thm:combination} For any integers $d, r \geqslant 0$ and real number $t \geqslant 1$, for any graph $H$ such that every minor of $H$ has edge density at most $t$, for every region intersection graph $G$ over $H$ where $G$ has maximum density at most $d$, $$\nabla_{r}(G) \leqslant et((2r + 2) d + 1).$$
\end{thm}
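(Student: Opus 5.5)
We combine \cref{lem:probabilistic} and \cref{lem:model}. Let $J$ be an $r$-shallow minor of $G$ with an $r$-shallow model $(S_i : i \in V(J))$. It suffices to show that $J$ has edge density at most $et((2r+2)d+1)$. By \cref{thm:Hakimi}, $G$ has an orientation $\vec G$ with $|N^-_{\vec G}(v)|\le d$ for every $v$.

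We take the representation of $G$ by subgraphs $A_v$ of $H$ to consist of trees, replacing each connected $A_v$ by a spanning tree; this preserves the intersection pattern. For each $i$, choose $c_i$ to be a centre of $G[S_i]$, and let $T_i$ be a BFS tree of $G[S_i]$ rooted at $c_i$. Then every root-to-vertex path in $T_i$ has at most $r+1$ vertices. Fix any ordering $\preccurlyeq$ and let $(Q_{ij})$ be the collection that properly represents the model, as constructed before \cref{lem:model}. Each $Q_{ij}$ is $P_{i,j}$, followed by the edge $xy$, followed by $P_{j,i}$. Since each $P$ is a root path in a BFS tree, $|Q_{ij}|\le 2(r+1)=2r+2$. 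Thus \cref{lem:probabilistic} will apply with $k=2r+2$.

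It remains to verify the hypothesis of \cref{lem:probabilistic} with $\beta=t$: every junction-free subgraph $J'$ of $J$ has edge density at most $t$. Given such a $J'$, restrict the model to $(S_i : i\in V(J'))$ and the paths to $(Q_{ij} : ij\in E(J'))$. By the observation preceding \cref{lem:model}, this restricted collection still properly represents the restricted model. Junction-freeness of $J'$ is exactly the hypothesis of \cref{lem:model} applied to $J'$. Hence the subgraph $J'^*$ induced by vertices of degree at least $2$ in $J'$ is a minor of $H$, and so $|E(J'^*)|\le t|V(J'^*)|$.

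The one remaining obstacle is handling vertices of degree at most $1$. We argue by induction on $|V(J')|$ that $|E(J')|\le t|V(J')|$. If $J'$ has a vertex $v$ of degree at most $1$, delete it. This removes at most $1\le t$ edges and one vertex, and $J'-v$ is still junction-free, since junctions in a subgraph are junctions in the larger graph. So induction applies. Otherwise $J'=J'^*$ is a minor of $H$, and the bound $t$ holds directly. The empty graph is trivial. This uses $t\ge 1$.

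Therefore \cref{lem:probabilistic} gives that $J$ has edge density at most $te((2r+2)d+1)$, which proves \cref{thm:combination}. \cref{main:rigs} follows because every minor of a graph in a minor-closed class $\mathcal G$ lies in $\mathcal G$.
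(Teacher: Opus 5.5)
Your proposal is correct and follows the paper's proof essentially step for step: the Hakimi orientation, centres with depth-$r$ trees giving $|Q_{ij}|\le 2r+2$, \cref{lem:model} applied to each junction-free $J'$, and \cref{lem:probabilistic} with $k=2r+2$ and $\beta=t$. The only cosmetic difference is how you handle vertices of degree at most $1$. You peel them off by induction, using that junction-freeness passes to subgraphs, whereas the paper counts directly that $|E(J')|\le|E(J^*)|+|A|\le t|V(J')|$, where $A$ is the set of vertices of $J'$ of degree at most $1$.
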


\begin{proof}
By \cref{thm:Hakimi}, there exists an orientation $\vec{G}$ of $G$ such that $|N_{\vec{G}}^{-}(v)| \leqslant d$ for each $v \in V(G)$. Let $J$ be a non-empty $r$-shallow minor of $G$ and let $(S_{i} : i \in V(J))$ be an $r$-shallow model of $J$ in $G$. For each $i \in V(J)$, let $c_{i}$ be the centre of $S_{i}$. For each $i \in V(J)$, let $T_{i}$ be a tree of $G[S_{i}]$ rooted at $c_{i}$ with depth at most $r$ such that $V(T_{i}) = S_{i}$. Let $\preccurlyeq$ be an arbitrary vertex ordering of $J$. Let $(Q_{ij} : ij \in E(J))$ be a corresponding collection of paths that properly represents the model $(S_{i} : i \in V(J))$. By the choice of $(T_{i} : i \in V(J))$, each path $Q_{ij}$ has at most $2r + 2$ vertices.

Let $J'$ be a non-empty junction-free subgraph of $J$. Note that $(S_{i} : i \in V(J'))$ is an $r$-shallow model of $J'$ in $G$ and $(Q_{ij} : ij \in E(J'))$ properly represents $(S_{i} : i \in V(J'))$. Let $J^*$ be the subgraph of $J'$ induced by $\{v \in V(J'): \deg_{J'}(v) \geqslant 2\}$. By \cref{lem:model} applied to $J'$, $J^*$ is a minor of $H$.

If $V(J^*) = \emptyset$ then $J'$ has maximum degree at most $1$, implying $|E(J')|/|V(J')| \leqslant \frac{1}{2} < t$. Now assume $V(J^*) \neq \emptyset$. By assumption, $|E(J^*)|/|V(J^*)| \leqslant t$. Let $A := \{v \in V(J'): \deg_{J'}(v) \leqslant 1\}$. Note that $A = V(J') \setminus V(J^*)$. By the definition of $J^*$ and since $t \geqslant 1$, $$|E(J')| \leqslant |E(J^*)| + |A| \leqslant t|V(J^*)| + |A| \leqslant t(|V(J^*)| + |A|) = t|V(J')|.$$

Thus $|E(J')|/|V(J')| \leqslant t$ regardless of whether $V(J^*) = \emptyset$. By \cref{lem:probabilistic}, $|E(J)| / |V(J)| \leqslant et((2r + 2) d + 1)$. 

Thus every $r$-shallow minor of $G$ has edge density at most $et((2r + 2) d + 1)$, as desired.
\end{proof}

\section{String Graphs} \label{section:stringgraphs}

This section presents an alternative proof that sparse string graphs have linear expansion. Here we use another measure of sparsity. A graph $G$ is \defn{$k$-degenerate} if every subgraph of $G$ has minimum degree at most $k$. The \defn{degeneracy} of $G$ is the minimum integer $k$ such that $G$ is $k$-degenerate. We implicitly use the well-known fact that $d \leqslant k\leqslant 2d$ for every graph with degeneracy $k$ and maximum density $d$.

The key observation of independent interest in our proof is that $k$-degenerate string graphs admit so-called $2k$-gap-cover-planar drawings. We start with necessary definitions. A \defn{drawing} of a graph $G$ represents each vertex of $G$ by a distinct point in a surface, and represents each edge $vw$ of $G$ by a non-self-intersecting curve between $v$ and $w$, such that no three edges cross at a single point.

The class of $k$-gap-cover-planar graphs was recently introduced by \citet{Wood25} as a combination of $k$-gap-planar graphs and $k$-cover-planar graphs. Specifically, for an integer $k \geqslant 0$, a drawing of a graph $G$ is \defn{$k$-gap-planar}~\citep{GapPlanar18} if every crossing can be charged to one of the two edges involved so that at most $k$ crossings are charged to each edge.  A graph is \defn{$k$-gap-planar} if it has a $k$-gap-planar drawing in the plane. Similar definitions were introduced by \citet{EG17} and \citet{OOW19}.

For a graph $G$ and a set $S \subseteq E(G)$, a \defn{vertex-cover} of $S$ is a set $C \subseteq V(G)$ such that every edge in $S$ has at least one endpoint in $C$. Distinct edges $e$ and $f$ in a graph are \defn{independent} if $e$ and $f$ have no common endpoint. Consider a drawing $D$ of a graph $G$. Let \defn{$D^\times$} be the set of pairs of independent edges that cross in $D$. A \defn{$D$-cover} of an edge $e\in E(G)$ is a vertex-cover of the set of edges $f$ such that $\{e,f\}\in D^\times$. Any minimal $D$-cover of $e$ does not include an endpoint of $e$. So it suffices to consider $D$-covers of an edge $vw$ in $G-v-w$. For an integer $k \geq 0$, a drawing $D$ of a graph $G$ is \defn{$k$-cover-planar} if each edge $vw\in E(G)$ has a $D$-cover in $G-v-w$ of size at most $k$. A graph $G$ is \defn{$k$-cover-planar}\footnote{There is a slight difference between this definition of $k$-cover-planar and the definition of $k$-cover-planar given by \citet{HKW}, who consider vertex-covers of the set of all edges that cross an edge $vw$ (including edges that cross $vw$ and are incident to $v$ or $w$). Every $D$-cover under this definition is a $D$-cover under our definition, and every $k$-cover-planar graph under our definition is $(k+2)$-cover-planar under the definition of \citet{HKW}, by simply adding $v$ and $w$ to the cover of each edge $vw$. We present this definition since it matches the preprint of \citet{Wood25} and the definition of gap-cover-planar graphs.}~\citep{HKW} if $G$ has a $k$-cover-planar drawing in the plane. Similar concepts were also considered by \citet*{AFPS14} and \citet*{MSSU24}.

\citet{Wood25} combined the definitions of $k$-gap-planar and $k$-cover-planar as follows. Consider a drawing $D$ of a graph $G$. A \defn{bearing} of $D$ is a set $B$ of ordered pairs $(e,f)$ with $\{e,f\}\in D^\times$, such that for each $\{e,f\}\in D^\times$ at least one of $(e,f)$ and $(f,e)$ (possibly both) is in $B$. For a bearing $B$ of $D$, a \defn{$B$-cover} of an edge $e\in E(G)$ is a vertex-cover of the set of all edges $f$ such that $(e,f)\in B$. Again, it suffices to consider $B$-covers of an edge $vw$ in $G - v - w$. For an integer $k \geq 0$, a drawing $D$ of a graph $G$ is \defn{$k$-gap-cover-planar} if there is a bearing $B$ of $D$ such that each edge $vw$ has a $B$-cover in of size at most~$k$. A graph $G$ is \defn{$k$-gap-cover-planar} if $G$ has a $k$-gap-cover-planar drawing in the plane. More generally, $G$ is \defn{$(g, k)$-gap-cover-planar} if $G$ has a $k$-gap-cover-planar drawing
in a surface of Euler genus at most $g$. The $g = 0$ case is equivalent to drawings in the plane.

\begin{lem} \label{string=gapcoverplanar} For every surface $\Sigma$ of Euler genus $g$, every $k$-degenerate string graph $G$ in $\Sigma$ has a $2k$-gap-cover-planar drawing in $\Sigma$, and thus $G$ is $(g, 2k)$-gap-cover-planar. In particular, every $k$-degenerate string graph in the plane is $2k$-gap-cover-planar.
\end{lem}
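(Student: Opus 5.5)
We construct the drawing directly from a string representation $(\gamma_v : v\in V(G))$ of $G$ in $\Sigma$, using an acyclic orientation coming from degeneracy. Since $G$ is $k$-degenerate, there is a vertex ordering in which each vertex has at most $k$ neighbours earlier in the ordering. Orient every edge from its later endpoint to its earlier endpoint, so each vertex $v$ has at most $k$ out-neighbours. By a standard perturbation we may assume that the strings are in general position: they meet in finitely many points, each intersection point lies in the interior of both strings and involves exactly two strings, and crossings are transversal.

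\textbf{Placement.} For each vertex $v$, place the point representing $v$ at some interior point $p_v$ of $\gamma_v$, chosen generically so that it is not an intersection point. For an edge $vw$, choose an intersection point $q_{vw}\in\gamma_v\cap\gamma_w$. Draw the edge $vw$ as the curve that follows $\gamma_v$ from $p_v$ to $q_{vw}$ and then follows $\gamma_w$ from $q_{vw}$ to $p_w$. Thus every edge of $G$ is drawn inside $\gamma_v\cup\gamma_w$. A small local perturbation then separates overlapping edge segments, since many edges run along the same string, and removes tangencies. The result is a drawing in which the only crossings between two edges $e=vw$ and $f=xy$ occur near points of $(\gamma_v\cup\gamma_w)\cap(\gamma_x\cup\gamma_y)$. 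Edges sharing an endpoint are irrelevant, because $D^\times$ only contains pairs of independent edges.

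\textbf{Bearing and covers.} Consider independent edges $e=vw$ and $f=xy$ that cross. The crossing comes from a point of $\gamma_a\cap\gamma_b$ with $a\in\{v,w\}$ and $b\in\{x,y\}$, which means $ab\in E(G)$. Suppose this edge is oriented $a\to b$, so $b$ is an out-neighbour of $a$. Put $(e,f)$ into the bearing $B$, and note that $f$ is covered by $b\in N^+(a)$. Every crossing pair then receives at least one ordered pair in $B$. For each edge $e=vw$, the set $N^+(v)\cup N^+(w)\setminus\{v,w\}$ has size at most $2k$, and it covers every $f$ with $(e,f)\in B$. Hence $D$ is $2k$-gap-cover-planar. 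The final statements of the lemma follow because the drawing lies in $\Sigma$.

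\textbf{Main obstacle.} The hard part is making the perturbation rigorous. Edges that share a segment of a string must be pushed apart without creating crossings that are not attributable to some $\gamma_a\cap\gamma_b$ with $ab\in E(G)$. This is fine for crossings among edges that all run along the same $\gamma_v$, since those edges share the endpoint $v$ and are therefore not independent. More care is needed at the points $q_{vw}$ and at the points $p_v$, where several edges turn. I would handle this by routing edges along $\gamma_v$ inside a thin band around $\gamma_v$. Inside the band they stay pairwise non-crossing except near turning points, and at turning points any crossing involves edges that both contain $v$ or that are attributable to the same intersection point $\gamma_v\cap\gamma_w$. The remaining conditions are that no three edges cross at a single point and that no edge self-intersects. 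These are obtained by small generic adjustments: I would shortcut any self-intersection, which only removes crossings.
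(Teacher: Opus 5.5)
Your proof is correct. Its skeleton matches the paper's:
\begin{itemize}
\item a degeneracy ordering;
\item each vertex placed at a non-intersection point of its string;
\item each edge drawn close to the union of its endpoints' strings, so that every crossing of independent edges $e,f$ is witnessed by some $ab\in E(G)$ with $a\in e$, $b\in f$;
\item a cover of $vw$ taken from the at most $k$ one-sided neighbours of $v$ and of $w$.
\end{itemize}

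On the drawing, the paper is simpler, and your ``main obstacle'' disappears. The paper never perturbs the strings into general position. Instead it fixes $\varepsilon>0$ so small that the $\varepsilon$-neighbourhoods $A^\varepsilon_a, A^\varepsilon_b$ of two strings meet if and only if $ab\in E(G)$. It then draws $v_iv_j$ as an arbitrary simple curve from $x_i$ to $x_j$ inside $A^\varepsilon_i\cup A^\varepsilon_j$. Every crossing of independent edges then lies in some $A^\varepsilon_a\cap A^\varepsilon_b$, so attribution is automatic and nothing at the turning points needs controlling. Your thin bands are a special case of this.

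The substantive difference is the bearing, and here your choice is the right one. The paper takes $N_i$ to be the at most $k$ \emph{later} neighbours of $v_i$. It uses the global rule $(v_av_b,v_cv_d)\in B$ iff $\max(c,d)>\max(a,b)$, and asserts that $N_i\cup N_j$ is a $B$-cover of $v_iv_j$. That assertion does not follow from the rule. The adjacency witnessing a crossing may join an endpoint of $e$ to an endpoint of $f$ that is \emph{earlier} in the order, and such a vertex need not lie in $N_i\cup N_j$.

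Here is a counterexample with $k=2$.
\begin{itemize}
\item Take the order $v_1,u_1,\dots,u_5,v_3,w_1,\dots,w_5$ and the edges $v_1v_3$, $u_\ell v_3$, $u_\ell w_\ell$. Each vertex has at most two later neighbours.
\item Let $\gamma_{v_3}$ be a horizontal segment with $x_{v_3}$ at its left end, and let $\gamma_{v_1}$ cross it near its right end.
\item Let each $\gamma_{u_\ell}$ cross $\gamma_{v_3}$ vertically, with $x_{u_\ell}$ below it.
\item Let each $\gamma_{w_\ell}$ be a short segment crossing $\gamma_{u_\ell}$ above $\gamma_{v_3}$.
\end{itemize}
Inside the neighbourhoods, $v_1v_3$ must run left-to-right along $\gamma_{v_3}$, and each $u_\ell w_\ell$ must run bottom-to-top across it. So $v_1v_3$ is forced to cross all five pairwise disjoint edges $u_\ell w_\ell$, and all five pairs lie in $B$. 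Yet $N_{v_1}\cup N_{v_3}=\{v_3\}$, and every $B$-cover of $v_1v_3$ has size at least $5>2k$.

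Your per-crossing rule puts $(e,f)$ into $B$ only when some endpoint of $f$ is an out-neighbour of some endpoint of $e$. That is exactly what makes the bound $|N^+(v)\cup N^+(w)|\le 2k$ valid, so your argument proves the lemma as stated.
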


\begin{proof}

Let $n := |V(G)|$. Since $G$ is $k$-degenerate, we can enumerate the vertices $V(G) = \{v_{1}, \dots, v_{n}\}$ such that $v_{i}$ has at most $k$ neighbours in $\{v_{i + 1}, \dots, v_{n}\}$ for each $i \in \{1, \dots, n\}$. Let $N_{i} \subseteq \{v_{i + 1}, \dots, v_{n}\}$ be the set of such neighbours, so $|N_{i}| \leqslant k$.

Let $\mathcal{C} := (\gamma_{i} : i \in \{1, \dots, n\})$ be a collection of curves in $\Sigma$ that represents $G$, where each curve $\gamma_{i}$ represents~$v_{i}$. For $\varepsilon > 0$, for each $i \in \{1, \dots, n\}$, let $A_{i}^{\varepsilon} := \{p \in \Sigma : \dist_{\Sigma}(p, \gamma_{i}) < \varepsilon\}$. Choosing $\varepsilon$ to be sufficiently small, for every pair of distinct indices $i, j \in \{1, \dots, n\}$ we can assume that $A_{i}^{\varepsilon} \cap A_{j}^{\varepsilon} \neq \emptyset$ if and only if $v_{i}v_{j} \in E(G)$.

We construct a drawing of $G$ as follows. For each curve $\gamma_{i} \in \mathcal{C}$, pick an arbitrary point $x_{i} \in \gamma_{i}$ that is not an intersection point. Associate each vertex $v_{i}$ of $G$ with $x_{i}$. Associate each edge $v_{i}v_{j}$ of $G$ with a non-self-intersecting curve drawn between $x_{i}$ and $x_{j}$ in $A_{i}^{\varepsilon} \cup A_{j}^{\varepsilon}$. Draw the edges of $G$ such that no three edges cross at a single point. This constitutes a drawing $D$ of~$G$. 

We now show that $D$ is a $2k$-gap-cover-planar drawing. Let $B$ be the bearing of $D$ that consists of ordered pairs $(v_{a}v_{b}, v_{c}v_{d})$ of independent crossing edges of $G$ such that $\max(c, d) > \max(a, b)$. Consider an edge $v_{i}v_{j}$ of $G$. Let $v_{p}v_{q}$ be an edge of $G$ such that $(v_{i}v_{j}, v_{p}v_{q}) \in B$ (so $v_{p}v_{q}$ crosses $v_{i}v_{j}$). By construction, $v_{i}v_{j}$ is drawn in $A_{i}^{\varepsilon} \cup A_{j}^{\varepsilon}$, and $v_{p}v_{q}$ is drawn in $A_{p}^{\varepsilon} \cup A_{q}^{\varepsilon}$. Therefore, $v_{p} \in N_{i} \cup N_{j}$ or $v_{q} \in N_{i} \cup N_{j}$. Hence $N_{i} \cup N_{j}$ is a $B$-cover of $v_{i}v_{j}$. So each edge of $G$ has a $B$-cover of size at most $2k$. Thus $D$ is a $2k$-gap-cover-planar drawing in $\Sigma$, and $G$ is a $(g, 2k)$-gap-cover-planar graph.
\end{proof}

\citet[Corollary~9]{Wood25} proved that $\nabla_r(G) \leqslant 18(r+1)(k+1)$ for every $k$-gap-cover-planar graph $G$. So, by \cref{string=gapcoverplanar}, $\nabla_r(G) \leqslant 18(r+1)(2k+1)$ for every $k$-degenerate string graph $G$ in the plane. \citet[Theorem~21]{Wood25} proved that $\nabla_r(H) \leqslant 6(g + 3152)(r+1)(k+1)$ for every $(g, k)$-gap-cover-planar graph $H$. So, by \cref{string=gapcoverplanar}, $\nabla_r(H) \leqslant 6(g + 3152)(r+1)(2k+1)$ for every $k$-degenerate string graph $H$ in a surface with Euler genus $g$. This provides an alternative proof that sparse string graphs have linear expansion. Observe that our bounds in \cref{thm:mainplane} and \cref{thm:mainsurface} are better than these bounds. In particular, for surfaces of Euler genus $g$, our bound in \cref{thm:mainsurface} improves the above bound by the factor of $\mathcal{O}(\sqrt{g})$.

\cref{string=gapcoverplanar} shows that sparse string graphs admit gap-cover-planar drawings. We now show that both `gap' and `cover' are essential. First, the complete bipartite graph $K_{3, n}$ has $\mathcal{O}(n)$ edges and crossing number $\Omega(n^2)$ \citep{Kleitman70}. Since every $k$-gap-planar graph with $m$ edges has crossing number at most $km$ (see \citep[Property~1]{GapPlanar18}), $K_{3, n}$ is not $o(n)$-gap-planar. It is straightforward to show that $K_{3, n}$ is a $3$-degenerate string graph. Thus $3$-degenerate string graphs with $n$ vertices are not $o(n)$-gap-planar. Second, for $n \geqslant 2$ consider the graph $G_n$ obtained from the $(n \times n)$-grid by adding a dominant vertex. It is straightforward to show that $G_n$ is a $3$-degenerate string graph. The treewidth of $G_n$ is $n + 1$ (see \citep[Lemma~20]{HW17} for a proof), and hence $G_n$ has layered treewidth and row treewidth $\Omega(\sqrt{n})$. \citet{HKW} proved that every $k$-cover-planar graph with no $t$ pairwise crossing edges incident to a common vertex has layered treewidth and row treewidth at most $f(k, t)$ for some function $f$. Thus for each fixed $k$ and $t$ and for sufficiently large $n$, $G_n$ does not have a $k$-matching-planar drawing with no $t$ pairwise crossing edges incident to a common vertex. Thus sparse string graphs admit gap-cover-planar drawings, but do not admit gap-planar or cover-planar drawings.

\section{Optimality}
\label{section:optimality}

This section shows that the bound in \cref{thm:mainplane} is within an absolute constant factor of optimal. For a collection $\mathcal{C}$ of curves, the corresponding string graph is denoted by \defn{$G_{\mathcal{C}}$}. That is, $V(G_{\mathcal{C}}) = \mathcal{C}$ and $\beta_{1}\beta_{2} \in E(G_{\mathcal{C}})$ if and only if $\beta_{1}$ and $\beta_{2}$ intersect for distinct $\beta_1, \beta_2 \in \mathcal{C}$.

\begin{prop} \label{main:optimality} For any integers $d \geqslant 2$ and $r \geqslant 1$, there exists a string graph $G$ in the plane with maximum density at most $d$ such that $\nabla_{r}(G) \geqslant dr + \frac{d}{2} - r - 1$. 
\end{prop}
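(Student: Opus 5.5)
The plan is to realise a dense $r$-shallow minor $J$ in which every branch set is a path of $2r+1$ strings, each of which sends about $d-1$ edges to other branch sets. The count behind this choice: a branch set of radius $r$ can contain $2r+1$ vertices, and if each of them has about $d-1$ ``external'' neighbours in distinct branch sets, then $J$ has about $m(2r+1)(d-1)/2$ edges on $m$ vertices. That gives edge density about $(2r+1)(d-1)/2 = dr + \frac{d}{2} - r - \frac12$, which matches the target $dr + \frac{d}{2} - r - 1$ up to rounding. Meanwhile $G$ itself has average degree about $2 + (d-1)$ per string, which should keep its maximum density at most $d$.

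\textbf{Step 1 (abstract graph).} Take $J$ to be a graph on $m$ vertices of edge density at least $dr + \frac{d}{2} - r - 1$. The natural first choice is $J = K_m$ with $m := (2r+1)(d-1)+1$, adjusted by one when parity requires; the ``$-1$'' slack in the statement absorbs this adjustment. Orient the edges of $J$ so that every vertex has out-degree at most $\lceil (m-1)/2\rceil$. Split the out-edges of each vertex $i$ among its $2r+1$ designated strings $s_{i,-r},\dots,s_{i,r}$, with about $(d-1)/2$ out-edges per string. Define $G$ abstractly to consist of the paths $s_{i,-r}\cdots s_{i,r}$, together with one edge $s_{i,a}s_{j,b}$ for each edge $ij \in E(J)$ charged to $s_{i,a}$, where $s_{j,b}$ is an endpoint chosen to balance loads.

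\textbf{Step 2 (sparsity).} Orient each path edge towards the centre and each external edge towards the string it is charged to. In this orientation every string has in-degree at most $1 + \lceil (d-1)/2\rceil \leqslant d$ when $d \geqslant 2$. Then \cref{thm:Hakimi} gives maximum density at most $d$. The branch sets $\{s_{i,-r},\dots,s_{i,r}\}$ have radius $r$ with centre $s_{i,0}$, so $J$ is an $r$-shallow minor of $G$ and $\nabla_r(G) \geqslant |E(J)|/|V(J)|$.

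\textbf{Step 3 (geometric realisation), the main obstacle.} One must show that $G$, or a slight supergraph of it that still has maximum density at most $d$, is a string graph in the plane. A complete $J$ cannot simply be drawn, since subdivisions of nonplanar graphs are not string graphs. Extra intersections are harmless only when they occur between strings of the same branch set, or between pairs that are already adjacent, or when they add at most a bounded number of in-arcs per string. The approach I would try first is the following. Draw the $m$ branch sets as nested or parallel ``combs'', with each $s_{i,a}$ a long curve. Route the external contacts so that every crossing is between two strings of the same branch set, which should be possible by letting each branch set's strings be long and fold around each other. If that fails, I would weaken $J$ from $K_m$ to a graph that is realisable by construction, for instance a product-like arrangement of horizontal and vertical string families where each string meets exactly $d-1$ transversal strings, while still achieving density $(2r+1)(d-1)/2$ up to the allowed slack. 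Verifying that no unintended intersections raise the density above $d$ is where I expect most of the work to lie.
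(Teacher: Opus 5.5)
Your Steps 1--2 set up the right target, but there is a genuine gap: Step 3 is the entire content of the proposition, and you do not carry it out. You list two strategies, folding strings so that extra crossings stay inside branch sets, or weakening $J$ to a product-like arrangement, and execute neither. The first is also unsafe as stated. Crossings inside a branch set are not harmless for the density condition: $2r+1$ pairwise crossing strings already induce $K_{2r+1}$, of density $r$, which exceeds $d$ once $r>d$.

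More importantly, the obstacle you diagnose is not real. The non-planarity obstruction for subdivisions comes from strings of different subdivided edges having to avoid each other. In your graph, each edge of $J$ is a direct crossing of two strings from different branch sets. Different branch sets may cross each other freely, as long as every crossing is a designated adjacency.

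There is also a small slip in Step 2. Orienting every path edge towards the centre gives $s_{i,0}$ two in-arcs from its own path, so your in-degree bound fails at the centre when $d=2$. Orient each path monotonically instead. Or simply note that $G$ has maximum degree at most $d+1$, so every subgraph has edge density at most $(d+1)/2\leqslant d$.

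The missing realisation is immediate. Take $m=(2r+1)(d-1)+1$ lines in general position (no two parallel, no three concurrent). The $m-1=(2r+1)(d-1)$ crossing points on a line $L_i$ split into $2r+1$ consecutive blocks of $d-1\geqslant 1$ points. Cover block $a$ by a closed segment $s_{i,a}\subseteq L_i$ so that consecutive segments overlap in a short interval containing no crossing point, and non-consecutive segments are disjoint. For $i\neq j$ we have $s_{i,a}\cap s_{j,b}\subseteq L_i\cap L_j$, so two segments of distinct lines meet if and only if both contain the crossing point of their lines. This gives the following:
\begin{itemize}
\item the segments of $L_i$ induce a path on $2r+1$ vertices, of radius $r$;
\item every pair of lines yields an edge between the corresponding branch sets;
\item each segment has at most $2+(d-1)$ neighbours.
\end{itemize}
So $G$ has maximum density at most $d$, $K_m$ is an $r$-shallow minor, and $\nabla_r(G)\geqslant (m-1)/2=dr+\frac{d}{2}-r-\frac{1}{2}$. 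This is exactly your Step 1 graph, with the arrangement choosing the external matching.

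The paper takes a different route. It uses $(d-1)(2r+1)$ vertical columns, each a path of $2r+1$ segments, plus $d-1$ horizontal segments per row, each crossing that entire row. A branch set is a column together with one horizontal, assigned by a bijection. The horizontals have large degree but are oriented outwards, so Hakimi's orientation theorem still applies. Your bounded-degree version is cleaner on one point. In the paper's construction, a horizontal in row $1$ or row $2r+1$ makes $G[S_j]$ a path on $2r+2$ vertices, of radius $r+1$; the step $\lceil r'/2\rceil=r$ fails for $r'=2r+1$. In the line construction, every branch set has radius exactly $r$.
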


\begin{proof}
Let $d' : = d - 1$ and $r' := 2r + 1$. For each $j \in \{1, \dots, d'r'\}$, draw a `column' of $r'$ segments $\alpha_{1, j}, \dots, \alpha_{r', j}$ such that $G_{\{\alpha_{1, j}, \dots, \alpha_{r', j}\}}$ is a path. Draw these $d'r'$ columns so that any column is a horizontal translation of any other column and no two segments of distinct columns intersect. Here $\alpha_{i, j}$ is positioned at the $i$-th row and $j$-th column for any $i \in \{1, \dots, r'\}$ and $j \in \{1, \dots, d'r'\}$. Let $\mathcal{A} := \{\alpha_{i, j} : i \in \{1, \dots, r'\}, j \in \{1, \dots, d'r'\}\}$.

 For each $i \in \{1, \dots, r'\}$, draw $d'$ horizontal segments $\gamma_{(i, 1)}, \dots, \gamma_{(i, d')}$, each of which crosses all the segments $\alpha_{i, 1}, \dots, \alpha_{i, d'r'}$ of the $i$-th row and no other segments of~$\mathcal{A}$. Let $\Gamma := \{\gamma_{(i, k)} : i \in \{1, \dots, r'\}, k \in \{1, \dots, d'\}\}$ and $\mathcal{C} := \mathcal{A} \cup \Gamma$. Draw the segments of $\Gamma$ so that no two of them intersect each other. See \cref{figure:example}.

\begin{figure}[h]
        \centering
        \scalebox{2}{\includegraphics{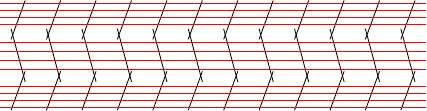}}
        \caption{Construction in the proof of \cref{main:optimality} with $r' = 3$ and $d' = 4$ (so $r = 1$ and $d = 5$). The segments of $\mathcal{A}$ are black, and the segments of $\Gamma$ are red.}
        \label{figure:example}
\end{figure}

Let $\vec{G}_{\mathcal{C}}$ be the directed graph obtained from $G_{\mathcal{C}}$ by orienting each edge $\gamma_{(i, k)}\alpha_{i, j}$ from $\gamma_{(i, k)}$ to $\alpha_{i, j}$ and each edge $\alpha_{i, j}\alpha_{i, j + 1}$ from $\alpha_{i, j}$ to $\alpha_{i, j + 1}$. By construction, $N_{\vec{G}_{\mathcal{C}}}(\beta) \leqslant d' + 1 = d$ for each $\beta \in \vec{G}_{\mathcal{C}}$. By \cref{thm:Hakimi}, $G_{\mathcal{C}}$ has maximum density at most $d$.

Let $f : \{1, \dots, d'r'\} \rightarrow \{(i, k) : i \in \{1, \dots, r'\}, k \in \{1, \dots, d'\}\}$ be a bijection. For each $j \in \{1, \dots, d'r'\}$, let $S_{j} := \{\alpha_{1, j}, \dots, \alpha_{r', j}, \gamma_{f(j)}\}$. Since $f$ is a bijection, $S_{1}, \dots, S_{d'r'}$ are pairwise disjoint. By construction, $G_{\mathcal{C}}[\{\alpha_{1, j}, \dots, \alpha_{r', j}\}]$ is a path with $r'$ vertices, and $\gamma_{f(j)}$ crosses exactly one of $\alpha_{1, j}, \dots, \alpha_{r', j}$. Therefore for each $j \in \{1, \dots, d'r'\}$, the graph $G_{\mathcal{C}}[S_{j}]$ is connected and has radius at most $\lceil \frac{r'}{2} \rceil = r$. For all distinct $j_{1}, j_{2} \in \{1, \dots, d'r'\}$, the segment $\gamma_{f(j_{1})}$, which belongs to $S_{j_{1}}$, crosses exactly one of the segments $\alpha_{1, j_2}, \dots, \alpha_{r', j_2} \in S_{j_{2}}$ of the $j_{2}$-th column. Hence, there exists an edge of $G_{\mathcal{C}}$ between $S_{j_{1}}$ and $S_{j_{2}}$. Thus $S_{1}, \dots, S_{d'r'}$ constitute branch sets of an $r$-shallow model of $K_{d'r'}$ in $G_{\mathcal{C}}$. The edge density of $K_{d'r'}$ is $\frac{d'r' - 1}{2}$, and hence $\nabla_{r}(G_{\mathcal{C}}) \geqslant \frac{d'r' - 1}{2} = \frac{(d - 1)(2r + 1) - 1}{2} = dr + \frac{d}{2} - r - 1$. This completes the proof.
\end{proof}

Since \cref{thm:mainsurface} and \cref{main:rigs} extend \cref{thm:mainplane}, \cref{main:optimality} implies that the dependence on $r$ and $d$ in these results cannot be improved to be sublinear.

\section{Colouring} \label{section:colouring}

We now explore some applications of \cref{thm:mainplane,thm:mainsurface,main:rigs} to graph colouring. \citet{KY03} introduced the following definition. For a graph $G$, total order $\preceq$ of $V(G)$, and integer $r\geq 0$, a vertex $w\in V(G)$ is \defn{$r$-reachable} from a vertex $v\in V(G)$ if there is a $vw$-path $P$ in $G$ of length at most $r$, such that $w\preceq v\prec x$ for every internal vertex $x$ in $P$. Let \defn{$\sreach_r(G,\preceq,v)$} be the set of $r$-reachable vertices from $v$. 
For a graph $G$ and integer $r\geq 0$, the (\defn{strong}) \defn{$r$-colouring number $\scol_r(G)$} is the minimum integer such that there is a total order~$\preceq$ of $V(G)$ with $|\sreach_r(G,\preceq,v)|\leq \scol_r(G)$ for every vertex $v$ of $G$. 

Generalised colouring numbers are important because they characterise bounded expansion classes \citep{Zhu09}, they characterise nowhere dense classes \citep{GKRSS18}, and have several algorithmic applications such as the constant-factor approximation algorithm for domination number by \citet{Dvorak13}, and the almost linear-time model-checking algorithm of \citet{GKS17}. See the survey by \citet{Siebertz26}. Generalised colouring numbers also provide upper bounds on several graph parameters of interest. For example, a proper vertex-colouring of a graph $G$ is \defn{acyclic} if the union of any two colour classes induces a forest; that is, every cycle is assigned at least three colours. The \defn{acyclic chromatic number} $\chi_\text{a}(G)$ of a graph $G$ is the minimum integer $k$ such that $G$ has an acyclic $k$-colouring. Acyclic colourings are qualitatively different from colourings, since every  graph with bounded acyclic chromatic number has bounded average degree. \citet{KY03} proved that every graph $G$ satisfies
\begin{equation} 
    \label{ACN} \tag{1}
    \chi_\text{a}(G)\leq \scol_2(G).
\end{equation}
Other examples include game chromatic number \citep{KT94,KY03}, Ramsey numbers \citep{CS93}, oriented chromatic number \citep{KSZ-JGT97}, arrangeability~\citep{CS93}, etc.

\citet{Zhu09} first showed that $\scol_r(G)$ is upper bounded by a function of $\nabla_r(G)$ (or more precisely, by a function of 
$\nabla_{(r-1)/2}(G)$). The best known bounds follow from results of \citet{GKRSS18}. In particular, for every integer $r\geq 1$,
\begin{equation}
    \label{scol-nabla} \tag{2}
    \scol_r(G) \leq  (6r)^r \nabla_{r-1}(G)^{3r}.
\end{equation}

First, let $t \geqslant 1$ be a real number and let $\mathcal{G}$ be a minor-closed class of graphs such that every graph in $\mathcal{G}$ has edge density at most $t$. Let $G$ be a region intersection graph over $\mathcal{G}$ such that $G$ has maximum density at most some non-negative integer $d$. By \cref{main:rigs}, $$\nabla_{r}(G) \leqslant et((2r + 2) d + 1).$$

By  \eqref{scol-nabla},   for $r\geq 1$, 
 \begin{align*}
    \scol_r(G) 
    \leq    (6r)^r (et(2rd + 1))^{3r} 
   =    6^rr^re^{3r}t^{3r}(2rd+1)^{3r}.    
\end{align*}

By \cref{ACN} with $r = 2$,
$$\chi_\text{a}(G)\leq \scol_2(G) \leq 144e^{6}t^{6}(4d + 1)^{6}.$$

Second, consider a string graph $G$ in a surface with Euler genus $g$ such that $G$ has maximum density at most some non-negative integer $d$. As explained in \cref{section:intro}, $G$ is a region intersection graph over a graph with Euler genus at most $g$, and every graph with Euler genus $g$ has maximum density at most $\sqrt{3g/2} + 3$. Hence the above inequalities for region intersection graphs can be applied, setting $t = \sqrt{3g/2} + 3$. Thus for $r \geqslant 1$, $$ \scol_r(G) \leqslant 6^rr^re^{3r}(\sqrt{3g/2} + 3)^{3r}(2rd+1)^{3r}$$ and with $r = 2$, $$\chi_\text{a}(G)\leq \scol_2(G) \leq 144e^{6}(\sqrt{3g/2} + 3)^{6}(4d + 1)^{6}.$$

Finally, consider a string graph $G$ in the plane with maximum density at most some non-negative integer $d$. Here the above inequalities can be applied, setting $g = 0$. Therefore for $r \geqslant 1$, $$ \scol_r(G) \leqslant 6^rr^re^{3r}3^{3r}(2rd+1)^{3r}$$ and with $r = 2$, $$\chi_\text{a}(G)\leq \scol_2(G) \leq 144e^{6}3^{6}(4d + 1)^{6}.$$

\subsection*{Acknowledgements}

Thanks to Jung Hon Yip for helpful comments on an early draft of this paper.

{
\fontsize{10pt}{11pt}
\selectfont
\bibliographystyle{NikolaiNatbibStyle}
\bibliography{NikolaiBibliography}
}

\end{document}